\documentclass[12pt]{article} 
\usepackage{setspace}
\usepackage{amsmath, amssymb, amsthm} 
\usepackage{graphicx} 
\usepackage{float}

\title{Local well-posedness and blow-up for the restricted fourth-order Prandtl equation}
\author{
Ik Hyun Choi
}
\usepackage{indentfirst}

\date{}
\begin{document}
\maketitle
\footnotetext{
Email: \texttt{cih0212@snu.ac.kr}

\medskip
\textbf{Keywords.}  Prandtl-type equation;
Nonlocal parabolic equation;
Clamped biharmonic heat kernel;
L1 estimate;
Local well-posedness.

\medskip
\textbf{MSC2020.} 35K35.

\medskip
The author is grateful to Professor In-Jee Jeong for many helpful discussions
and for the encouragement throughout this work.
}

\begin{abstract}
We prove local well-posedness and finite-time blow-up for a restricted fourth-order Prandtl equation posed on the half-line with clamped boundary conditions. The equation arises from a two\mbox{-} dimensional fourth-order Prandtl system via an ansatz reduction, and its nonlinearity involves a nonlocal integral term. To close a Duhamel fixed-point argument, we need uniform $L^1$ bounds for the associated half-line biharmonic heat kernel. We establish uniform $L^1$ estimates for the kernel and its derivatives, and we show that the semigroup preserves spatial regularity under appropriate compatibility conditions, using an alternative representation derived by integration by parts. These kernel estimates yield local existence and uniqueness for the restricted model and allow us to construct solutions that blow-up in finite time. 
\end{abstract}
\theoremstyle{definition}
\newtheorem{prop}{Theorem}
\newtheorem{lemma}{Lemma}

\newtheorem{corollary}{Corollary}
\theoremstyle{remark}
\newtheorem{remark}{Remark}

\section{Introduction}
\begingroup
\setlength{\parindent}{15pt} 

The Prandtl equation is a central model in the mathematical theory of boundary layers
and has been extensively studied in connection with well-posedness \cite{GerardVaretDormy2010, SammartinoCaflisch1998a} and finite-time blow-up \cite{EEngquist1997, KukavicaVicolWang2017}. Motivated by higher-order dissipative regularizations, we consider the fourth-order
Prandtl-type system on $\mathbb{R}\times \mathbb{R}_+$,
\begin{equation}\label{eq:P4-2D}
\begin{cases}
u_t + u\,u_x + v\,u_y = -u_{yyyy}, & (t,x,y)\in(0,T)\times\mathbb{R}\times\mathbb{R}_+,\\
u_x + v_y = 0, & (t,x,y)\in(0,T)\times\mathbb{R}\times \mathbb{R}_+.
\end{cases}
\end{equation}
supplemented with the clamped boundary conditions
\begin{equation}\label{eq:P4-BC}
u(t,x,0)=0,\qquad u_y(t,x,0)=0,\qquad v(t,x,0)=0,
\end{equation}
and a suitable decay (or matching) condition as $y\to\infty$.

Following the idea in~\cite{EEngquist1997}, we focus on a reduced dynamics generated by a linear-in-$x$ ansatz and construct solutions of the form
\begin{equation}\label{eq:ansatz}
u(t,x,y)=-x\,a(t,y),
\end{equation}
where $a=a(t,y)$ is independent of $x$.
Then $u_x=-a$ and the incompressibility condition yields
\begin{equation}\label{eq:v-from-a}
v(t,x,y)=-\int_0^y u_x(t,x,\eta)\,d\eta=\int_0^y a(t,\eta)\,d\eta .
\end{equation}
Substituting \eqref{eq:ansatz}--\eqref{eq:v-from-a} into \eqref{eq:P4-2D}, we obtain the
one-dimensional nonlocal equation
\begin{equation}\label{eq:a-eq}
a_t = -a_{yyyy} + a^2 - a_y\int_0^y a(t,\eta)\,d\eta,
\qquad (t,y)\in(0,T)\times \mathbb{R}_{+},
\end{equation}
with boundary conditions and decay as $y\to\infty$ inherited from \eqref{eq:P4-BC},
\begin{equation}\label{eq:a-BC}
a(t,0)=0,\qquad a_y(t,0)=0.
\end{equation}

Therefore, local well-posedness and finite-time blow-up for \eqref{eq:a-eq} immediately yield the existence of finite-time blow-up solutions to the two-dimensional system
\eqref{eq:P4-2D}--\eqref{eq:P4-BC} of the form \eqref{eq:ansatz}. Our goal is to establish the existence theory for the restricted fourth-order Prandtl equation \eqref{eq:a-eq}.

Several initial-boundary value problems for fourth-order evolution equations on the half-line have been studied via the unified transform (Fokas) method; see, e.g., \cite{Chatziafratis2025HigherOrder,OzsariYolcu2019CPAA}.
While this approach provides explicit solution representations, it does not directly yield the $L^{1}$ estimates needed in our setting. Indeed, the nonlinearity in \eqref{eq:a-eq} contains a nonlocal integral term, and closing the Duhamel formulation requires uniform $L^{1}$ control of the associated half-line biharmonic heat kernel.

For this reason, we work instead with a direct spectral decomposition of the kernel. More precisely, we derive several estimates, including uniform $L^{1}$ bounds for the clamped biharmonic heat kernel and its derivatives. Also we show that, under suitable compatibility conditions on the initial data, the kernel preserves spatial regularity by using an alternative kernel representation obtained via integration by parts. These estimates allow us to establish local well-posedness of \eqref{eq:a-eq}. Furthermore, we prove the existence of finite-time blow-up solutions.
\endgroup

\section{Main Results}
Throughout this paper, $C^m(\mathbb{R}_+)$ denotes the space of functions whose
derivatives up to order $m$ are continuous and bounded on $\mathbb{R}_+$.
Moreover, $W^{m,1}(\mathbb{R}_+)$ denotes the Sobolev space of functions $f\in L^1(\mathbb{R}_+)$
such that $\partial_x^k f\in L^1(\mathbb{R}_+)$ for all integers $0\le k\le m$.
\subsection{Heat Kernel Representation}
\newcommand{\Rpp}{\mathbb{R}_+}
\newcommand{\PhiK}{\Phi}
\begin{prop}\label{prop:halfline_kernel}
Let $f\in C(\mathbb{R}_+)\cap L^1(\mathbb{R}_+)$. Consider the clamped biharmonic heat equation on the half-line:
\begin{equation}\label{eq:prob}
\begin{cases}
u_t(t,x) + u_{xxxx}(t,x)=0, & t>0,\; x>0,\\[1mm]
u(t,0)=0,\quad u_x(t,0)=0, & t>0,\\[1mm]
u(0,x)=f(x), & x>0.
\end{cases}
\end{equation}
We seek a classical solution $u$ such that, for every $t>0$ and every integer $m\ge0$,
$\lim_{x\to\infty} \partial_x^{\,m} u(t,x)=0.$

Define, for $t>0$ and $x,y\ge 0$,
\begin{equation}\label{eq:Phi_def}
\PhiK(r):=e^{-r}+\sin r-\cos r,
\end{equation}
and the kernel $K$,
\begin{equation}\label{eq:K_def}
K(t,x,y)
:=\frac{1}{\pi}\int_{0}^{\infty} e^{-k^{4}t}\,
\PhiK(kx)\,\PhiK(ky)\,dk.
\end{equation}
Set
\begin{equation}\label{eq:u_rep}
u(t,x):=\int_{0}^{\infty} K(t,x,y)\,f(y)\,dy.
\end{equation}
Then $u$ solves \eqref{eq:prob}.
\end{prop}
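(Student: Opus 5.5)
The plan is to verify the three requirements in turn: the PDE, the boundary conditions, and the initial condition. The PDE and boundary conditions will come from differentiating under the integral sign, and the initial condition from an eigenfunction-expansion (transform inversion) identity for $\Phi$.

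\textbf{Step 1 (PDE and boundary conditions).} Each $\Phi(kx)$ satisfies $\partial_x^4\Phi(kx)=k^4\Phi(kx)$, since $e^{-r},\sin r,\cos r$ are all fixed by $\frac{d^4}{dr^4}$. It also satisfies $\Phi(0)=1+0-1=0$ and $\Phi'(0)=-1+1+0=0$. Moreover $\Phi$ and all its derivatives are bounded on $[0,\infty)$.

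For $t\ge t_0>0$ the factor $e^{-k^4t}k^{m}$ is integrable in $k$ for every $m$. So for each fixed $y$ we may differentiate \eqref{eq:K_def} under the integral in $t$ and $x$ any number of times. This gives $K_t+K_{xxxx}=0$ and $K(t,0,y)=K_x(t,0,y)=0$.

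Since $|\partial_t^j\partial_x^m K(t,x,y)|\le C_{m,j}(t_0)$ uniformly in $x,y$ and $f\in L^1$, we may also differentiate under the $y$-integral in \eqref{eq:u_rep}. Hence $u$ is smooth for $t>0$ and satisfies the equation and the clamped boundary conditions.

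\textbf{Step 2 (decay as $x\to\infty$).} Write $\partial_x^m\Phi(kx)=k^m\bigl((-1)^m e^{-kx}+\text{trig}(kx)\bigr)$. The exponential part contributes a term that tends to $0$ as $x\to\infty$, by dominated convergence. For the trigonometric part, $x\mapsto\int_0^\infty e^{-k^4t}k^m\Phi(ky)\sin(kx)\,dk$ (and its cosine analogue) is a Fourier transform of an $L^1$ function of $k$. The Riemann–Lebesgue lemma, applied uniformly in the bounded family $\{\Phi(ky)\}$, gives decay; to obtain the uniformity one can integrate by parts once in $k$, which produces a factor $1/x$ with bounds uniform in $y$. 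Then $\partial_x^m u(t,x)\to0$ by dominated convergence in $y$ with majorant $C|f(y)|$.

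\textbf{Step 3 (initial data, the main obstacle).} We need $u(t,\cdot)\to f$ as $t\to0^+$, pointwise at each $x>0$ (or locally uniformly). The natural route is the completeness relation
\[
\frac1\pi\int_0^\infty\Phi(kx)\Phi(ky)\,dk=\delta(x-y)\quad\text{on }(0,\infty),
\]
i.e. the fact that $\Phi$ gives the generalized eigenfunction expansion of the self-adjoint operator $\partial_x^4$ with clamped conditions. I would prove it by expanding
\[
\Phi(kx)\Phi(ky)=\tfrac12\cos k(x-y)\cdot 2+\bigl[\text{terms}\ -\cos k(x+y),\ -\sin k(x+y)\bigr]+\text{terms with }e^{-kx},e^{-ky},
\]
using $(\sin a-\cos a)(\sin b-\cos b)=\cos(a-b)-\sin(a+b)$.

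Multiplied by $e^{-k^4t}$ and integrated in $k$, the $\cos k(x-y)$ term gives exactly the whole-line biharmonic heat kernel $G_t(x-y)$. Its action on $f$ (extended by zero) converges to $f(x)$ for $x>0$, because $G_t$ is an approximate identity: $\int G_t=1$, $\|G_t\|_{L^1}$ is bounded uniformly in $t$ by scaling, and $G_t$ concentrates at the origin.

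The remaining terms are functions of $x+y$, or contain $e^{-kx}$ or $e^{-ky}$. After the change of variables $k=s/t^{1/4}$, these become kernels of the form $t^{-1/4}H\bigl((x+y)/t^{1/4}\bigr)$, or of the form $t^{-1/4}H(x/t^{1/4},y/t^{1/4})$ with decay when either argument is large. For fixed $x>0$ they concentrate at $y\approx -x$ or $y\approx0$, hence away from $y=x$. Their integrals against $f$ therefore tend to $0$, using continuity and integrability of $f$ together with integrable bounds on $H$ (e.g. $|H(r)|\lesssim(1+r)^{-2}$ after integrating by parts twice in $s$). The delicate point is establishing these uniform decay bounds for the mixed terms like $\int e^{-s^4}e^{-sX}\sin(sY)\,ds$. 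I would handle them by contour rotation or integration by parts, using that $e^{-sX}$ only improves decay.
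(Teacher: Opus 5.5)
Your Steps 1 and 2 match items (i), (ii), (iv) of the paper's proof and are fine. One small correction: integrating by parts once in $k$ does \emph{not} give a bound uniform in $y$. It brings down $y\Phi'(ky)$, and along $y=x$ the trigonometric part does not tend to $0$. You do not need uniformity, though, since pointwise convergence in $y$ together with the majorant $C|f(y)|$ suffices.

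The genuine gap is in Step 3. The non-diagonal pieces are \emph{not}, one by one, kernels that concentrate near $y\approx -x$ or $y\approx 0$ with integrable profiles. Take $g(s)=e^{-s^4}$. The first integration by parts in $s$ leaves the boundary term $g(0)/r$, so $\frac1\pi\int_0^\infty e^{-s^4}\sin(sr)\,ds=\frac{1}{\pi r}+O(r^{-5})$, not $O(r^{-2})$. In the original variables this means $\frac1\pi\int_0^\infty e^{-k^4t}\sin k(x+y)\,dk\to\frac{1}{\pi(x+y)}$, and likewise $\frac1\pi\int_0^\infty e^{-k^4t}e^{-k(x+y)}\,dk\to\frac{1}{\pi(x+y)}$. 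These limits are $O(1)$ nonlocal kernels, not concentrating ones. The mixed terms behave the same way: for fixed $x>0$, by dominated convergence,
\[
\frac1\pi\int_0^\infty e^{-k^4t}e^{-kx}(\sin ky-\cos ky)\,dk\;\to\;\frac{y-x}{\pi(x^2+y^2)},\qquad
\frac1\pi\int_0^\infty e^{-k^4t}e^{-ky}(\sin kx-\cos kx)\,dk\;\to\;\frac{x-y}{\pi(x^2+y^2)}.
\]
In scaled variables these have only first-order decay, which the prefactor $t^{-1/4}$ turns back into an $O(1)$ kernel. Their integrals against $f$ therefore converge to limits that are in general nonzero (take $f\ge 0$ supported in $\{y>x\}$). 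No contour rotation or integration by parts can give the decay you want for these terms individually, because that decay is false.

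The missing idea is that these pieces vanish only after pairing, exactly as in the paper's decomposition into $A_0$, $B_0$, $C_0$. Your own expansion lists a nonexistent $-\cos k(x+y)$ and files $e^{-k(x+y)}$ among the exponential terms, separating it from its partner.

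First, group $e^{-k(x+y)}$ with $-\sin k(x+y)$. The antiderivative of $e^{-rs}-\sin rs$ vanishing at $s=0$ is $(\cos rs-e^{-rs})/r$, and this has the bounded antiderivative $(\sin rs+e^{-rs})/r^2$. Since $g'(0)=0$, two integrations by parts now genuinely give a profile of size $O((1+r)^{-2})$, and this part of the kernel is bounded by $Ct^{1/4}(x+y)^{-2}$.

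Second, group the two mixed terms. Their limits cancel; this is \eqref{eq:C_limit_mzero_integral}. Their sum is bounded by $C/x$ uniformly in $t,y$, using one integration by parts in $k$ for the term with $e^{-ky}$. Dominated convergence with $f\in L^1$ then sends this part to $0$.

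With this repair your route works. Your treatment of the diagonal part $\frac1\pi\int_0^\infty e^{-k^4t}\cos k(x-y)\,dk=G_t(x-y)$ as a whole-line approximate identity is correct. It is also somewhat more economical than the paper's argument, which runs the approximate-identity argument for the full kernel $K$ and therefore needs the uniform $L^1$ bound of Lemma~\ref{lem:L1bound_dxK} and Corollaries~\ref{cor:mass_one}--\ref{cor:mass_consentration}.
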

\begin{proof}
\medskip
\begin{enumerate}
\item[(i)]
We compute
\begin{align}
(u(t,x))_{t}
&= \int_{0}^{\infty}\int_{0}^{\infty}
-k^{4}e^{-k^{4}t}
\PhiK(kx)\,\PhiK(ky)\,dk\; f(y)\,dy \nonumber
\\
&=-(u(t,x))_{xxxx}.
\label{eq:proof_i_ut}
\end{align}
Thus $u$ satisfies the equation.

\item[(ii)]
We have
\begin{equation}\label{eq:proof_ii_bcK}
K_x(t,0,y)=0,\qquad K(t,0,y)=0.
\end{equation}
Hence, for each $t>0$,
\begin{equation}\label{eq:proof_ii_bcu}
u(t,0)=0,\qquad u_x(t,0)=0.
\end{equation}
Thus $u$ satisfies the boundary condition.

\item[(iii)]
For each $x>0$,
\begin{equation}\label{eq:proof_iv_main}
\lim_{t\to 0^+}\int_{0}^{\infty}K(t,x,y)\,f(y)\,dy=f(x).
\end{equation}
Fix $\varepsilon>0$. We write
\begin{align}
\int_{0}^{\infty}K(t,x,y)f(y)\,dy-f(x)
&= \int_{y-x \,\le\, \delta}K(t,x,y)\bigl(f(y)-f(x)\bigr)\,dy
\nonumber\\
&\quad + \int_{y-x\,>\,\delta}K(t,x,y)\bigl(f(y)-f(x)\bigr)\,dy
\nonumber\\
&\quad + \Bigl(\int_{0}^{\infty}K(t,x,y)\,dy-1\Bigr)f(x).
\label{eq:proof_iv_split3}
\end{align}
Using $|f(y)-f(x)|\le \varepsilon$ on $|y-x|\le\delta$, we obtain
\begin{align}
\Bigl|\int_{0}^{\infty}K(t,x,y)f(y)\,dy-f(x)\Bigr|
&\le \epsilon\int_{0}^{\infty}|K(t,x,y)|\,dy
\nonumber\\
&\quad + 2\|f\|_{L^\infty}\int_{\,y-x\,>\delta}|K(t,x,y)|\,dy
\nonumber\\
&\quad + \Bigl|\int_{0}^{\infty}K(t,x,y)\,dy-1\Bigr|\;|f(x)|.
\label{eq:proof_iv_est3}
\end{align}
 The uniform bound $\int_0^\infty |K(t,x,y)|\,dy \le C_0$ is consequence of Lemma~\ref{lem:L1bound_dxK}.
 
 By the corollary~\ref{cor:mass_one}, $\lim_{t\to0^+}\int_0^\infty K(t,x,y)\,dy=1$. Lastly, by the corollary~\ref{cor:mass_consentration},
$\lim_{t\to0^+}\int_{\,y-x\,>\delta}|K(t,x,y)|\,dy=0$. Thus we conclude that

\begin{equation}\label{eq:proof_iv_conclude_eps}
\lim_{t\to0^+}
\Bigl|\int_{0}^{\infty}K(t,x,y)f(y)\,dy-f(x)\Bigr|
\le \varepsilon C_0.
\end{equation}

Since $\varepsilon>0$ is arbitrary, \eqref{eq:proof_iv_main} follows.

\item[(iv)]
For each $t>0$ and each integer $m\ge 0$,
\begin{equation}\label{eq:proof_iii_limit_statement}
\lim_{x\to\infty} u_{x^{(m)}}(t,x)=0 .
\end{equation}
Indeed,
\begin{equation}\label{eq:proof_iii_bound_by_kernel}
\bigl|u_{x^{(m)}}(t,x)\bigr|
\le \int_{0}^{\infty}\bigl|\partial_x^{\,m}K(t,x,y)\bigr|\;|f(y)|\,dy .
\end{equation}
Moreover, By the lemma~\ref{lem:Linfbound_dxK}, 
\begin{equation}\label{eq:proof_iii_kernel_bound}
\bigl|\partial_x^{\,m}K(t,x,y)\,f(y)\bigr|
\le C\,t^{-(m+1)/4}\,\bigl|f(y)\bigr|.
\end{equation}

Thus, by the dominated convergence theorem,
\begin{equation}\label{eq:proof_iii_DCT}
\lim_{x\to\infty} u_{x^{(m)}}(t,x)
= \int_{0}^{\infty}\lim_{x\to\infty}\partial_x^{\,m}K(t,x,y)\,f(y)\,dy .
\end{equation}
By the Riemann--Lebesgue lemma, for each $t>0$,
\begin{equation}\label{eq:proof_iii_RL}
\lim_{x\to\infty}\partial_x^{\,m}K(t,x,y)=0.
\end{equation}
Combining \eqref{eq:proof_iii_DCT} and \eqref{eq:proof_iii_RL} yields \eqref{eq:proof_iii_limit_statement}.

\end{enumerate}
\end{proof}
\begin{lemma}\label{lem:L1bound_dxK}
For each integer $m\ge 0$, there exists a constant $C_m>0$, 
independent of $t>0$ and $x\ge0$, such that
\begin{equation}\label{eq:L1bound_statement}
\int_{0}^{\infty}\bigl|\partial_x^{\,m}K(t,x,y)\bigr|\,dy
\;\le\; C_m\,t^{-m/4}.
\end{equation}
\end{lemma}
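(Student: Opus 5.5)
The plan is to reduce to $t=1$ by scaling and then prove a pointwise bound for the rescaled kernel that is integrable in $y$ uniformly in $x$.

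\emph{Scaling.} Substituting $k=t^{-1/4}\kappa$ in \eqref{eq:K_def} gives
\[
K(t,x,y)=t^{-1/4}\,K\bigl(1,\,t^{-1/4}x,\,t^{-1/4}y\bigr),
\qquad
\partial_x^m K(t,x,y)=t^{-(m+1)/4}\,(\partial_X^m K)\bigl(1,\,t^{-1/4}x,\,t^{-1/4}y\bigr).
\]
Changing variables $Y=t^{-1/4}y$ in the $y$-integral then yields
\[
\int_0^\infty |\partial_x^mK(t,x,y)|\,dy = t^{-m/4}\int_0^\infty |\partial_X^mK(1,X,Y)|\,dY .
\]
So it suffices to show $\sup_{X\ge0}\int_0^\infty|\partial_X^mK(1,X,Y)|\,dY<\infty$. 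I will aim for the pointwise bound
\[
|\partial_X^mK(1,X,Y)|\le C_m\bigl[(1+|X-Y|)^{-2}+(1+X+Y)^{-2}\bigr],
\]
which is integrable in $Y$ uniformly in $X$.

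\emph{Decomposition.} Expand the product using
\[
(\sin a-\cos a)(\sin b-\cos b)=\cos(a-b)-\sin(a+b).
\]
This gives
\[
\Phi(kx)\Phi(ky)=\cos k(x-y)-\sin k(x+y)+e^{-k(x+y)}+e^{-kx}(\sin ky-\cos ky)+e^{-ky}(\sin kx-\cos kx).
\]
After applying $\partial_x^m$, the first term gives $\frac1\pi\int_0^\infty k^m e^{-k^4}\cos(k(x-y)+m\pi/2)\,dk$. This is, up to constants, a derivative of the whole-line biharmonic heat kernel, which is Schwartz in $x-y$. It is therefore bounded by $C(1+|x-y|)^{-N}$, which takes care of the $|X-Y|$ part.

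\emph{Remainder.} The remaining terms I group into a remainder $R_m(x,y)=\frac1\pi\int_0^\infty e^{-k^4}k^m\rho_m(kx,ky)\,dk$. Each piece of $\rho_m$ has one of the phases
\[
e^{\pm ik(x+y)},\quad e^{-kx\pm iky},\quad e^{-ky\pm ikx},\quad e^{-k(x+y)}.
\]
For each of these, the $k$-derivative of the phase has modulus at least $c(x+y)$. The plan is to integrate by parts twice in $k$, using $\partial_k$ of these phases, to gain $(x+y)^{-2}$ when $x+y\ge1$; for $x+y\le1$ the trivial bound $\int_0^\infty k^m e^{-k^4}\,dk$ suffices. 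The interior terms after integrating by parts are harmless, since derivatives of $e^{-k^4}k^m$ are integrable.

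\emph{Main obstacle: boundary terms at $k=0$.} For $m\ge2$ the factor $k^m$ kills these automatically. For $m=0,1$ they do not vanish individually; for instance, the $\sin k(x+y)$ term alone behaves like $1/(x+y)$, which is not integrable. The cancellation comes from $\Phi(r)=r^2+O(r^3)$. This makes the full integrand $e^{-k^4}k^m\partial_x^m[\Phi(kx)]\Phi(ky)$ vanish to order $k^{2}$ at $k=0$ (it is a product of an $O(k^2)$ factor and an $O(k^2)$ factor). So before integrating by parts I would treat the \emph{whole} integrand rather than splitting off the $\cos k(x-y)$ piece near $k=0$. Concretely:
\begin{enumerate}
\item Introduce a cutoff $\chi(k(1+x+y))$ near $k=0$.
\item On the support of $\chi$, where $k\lesssim(x+y)^{-1}$, use $|\Phi(kx)\Phi(ky)|\lesssim k^4x^2y^2\wedge 1$ and $|\Phi(r)|\lesssim\min(r^2,1)$ directly. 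The expected contribution is $O((1+x+y)^{-2})$; checking this carefully is part of the hardest step.
\item On the complementary region, boundary terms are absent, so integrate by parts twice as above.
\end{enumerate}
The separated whole-line piece is then recombined on the small-$k$ region, where it is trivially bounded. Verifying that this small-$k$ estimate yields $(1+x+y)^{-2}$, and not a weaker power, uniformly in $x$, is the delicate computation I expect to be the crux. If it fails, the fallback is an explicit first-order Taylor cancellation between $-\sin k(x+y)$ and the exponential terms.
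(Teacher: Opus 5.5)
\textbf{Verdict: there is a genuine gap, in the step you flag as the crux.} Your reduction to $t=1$ is sound. So is splitting off the whole-line piece $\frac1\pi\int_0^\infty k^me^{-k^4}\cos(k(x-y)+\frac{m\pi}2)\,dk$, and so are the two integrations by parts on the remaining pieces $c_je^{\lambda_jk}$ with $|\lambda_j|\ge (x+y)/\sqrt2$. This framework would give a more direct proof than the paper's route through $t\to0^+$ limits, continuity in $t$, and scaling. The problem is that the cutoff $\chi(k(1+x+y))$ does not remove the $k=0$ obstruction for $m=0$, which is the only case that has one. For $m=1$ the first boundary term is killed by the factor $k$, and the second is $c_j/\lambda_j^2=O((x+y)^{-2})$. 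For $m\ge1$ the factor $k^m$ makes every cutoff-generated term harmless.

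\emph{The small-$k$ whole-line piece.} For $m=0$, the small-$k$ part of the whole-line piece, which you propose to bound trivially, is genuinely of size $(1+x+y)^{-1}$. Substituting $\kappa=k(1+x+y)$, for fixed $x$ and $y\to\infty$,
\[
\frac1\pi\int_0^\infty\chi\bigl(k(1+x+y)\bigr)e^{-k^4}\cos k(x-y)\,dk=\frac{1}{\pi y}\int_0^\infty\chi(\kappa)\cos\kappa\,d\kappa+O(y^{-2}).
\]
For a nonincreasing cutoff with $\chi=1$ on $[0,1]$ and $\chi=0$ on $[2,\infty)$, the constant equals $-\int_1^2\chi'(\kappa)\sin\kappa\,d\kappa>0$. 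So the $y$-integral of this piece diverges logarithmically.

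\emph{The cutoff-derivative terms.} The same happens on the other side. Integrating $(1-\chi)e^{-k^4}\rho_0$ by parts, the derivative falling on the cutoff gives $\lambda_j^{-1}\int(1+x+y)\chi'(k(1+x+y))e^{-k^4}e^{\lambda_jk}\,dk$, which is of order $(x+y)^{-1}$. At the cutoff scale $|\lambda_j|k\asymp1$, so these terms differ from one $j$ to another and do not cancel among themselves. They cancel only against the piece above, so estimating the pieces separately in absolute value cannot close.

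\emph{A smaller point.} Your small-$k$ bound for the full integrand is $x^2y^2(1+x+y)^{-5}$. At $y=2x$ this is of order $1/x$, not $(1+x+y)^{-2}$. That piece survives only because it is still integrable in $y$ uniformly in $x$.

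\textbf{The fix.} Use your fallback, but with no cutoff, and as an exact identity rather than a Taylor cancellation. Integrating by parts twice on all of $[0,\infty)$,
\[
\int_0^\infty e^{-k^4}e^{\lambda_jk}\,dk=-\frac1{\lambda_j}+\frac1{\lambda_j^2}\int_0^\infty\bigl(e^{-k^4}\bigr)''e^{\lambda_jk}\,dk .
\]
Here $-\sum_jc_j/\lambda_j$ is the Abel integral $\int_0^\infty\rho_0(k)\,dk$, and it vanishes. Indeed, in the Abel sense and with $s=x+y$,
\[
\int_0^\infty(e^{-ks}-\sin ks)\,dk=0,\qquad \int_0^\infty[e^{-kx}(\sin ky-\cos ky)+e^{-ky}(\sin kx-\cos kx)]\,dk=\tfrac{y-x}{x^2+y^2}+\tfrac{x-y}{x^2+y^2}=0.
\]
Note that $\sum_jc_j=\rho_0(0)=-1\neq0$, so this is not a cancellation of Taylor coefficients at $k=0$. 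It is exactly the identity \eqref{eq:C_limit_mzero_integral} that the paper uses for $C_0$, and it is also why the paper's group $A_m$ produces no boundary terms. With this identity, your pointwise bound $|\partial_X^mK(1,X,Y)|\lesssim(1+|X-Y|)^{-2}+(1+X+Y)^{-2}$ holds for every $m\ge0$, and the lemma follows.
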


\begin{proof}
Differentiate under the $k$-integral. Since $\partial_x^m \PhiK(kx)=k^m\PhiK^{(m)}(kx)$,
\begin{equation}\label{eq:dxmK_before_scaling}
\partial_x^{\,m}K(t,x,y)
=\frac1\pi\int_{0}^{\infty} k^{m}e^{-k^{4}t}\,\PhiK^{(m)}(kx)\,\PhiK(ky)\,dk.
\end{equation}
Write $k^m=t^{-m/4}(kt^{1/4})^m$ and define
\begin{equation}\label{eq:gm_def}
g_m(t,x,y):=\int_{0}^{\infty} (kt^{1/4})^{m}e^{-k^{4}t}\,\PhiK^{(m)}(kx)\,\PhiK(ky)\,dk.
\end{equation}
Then
\begin{align}
\int_{0}^{\infty}\bigl|\partial_x^{\,m}K(t,x,y)\bigr|\,dy
&=\frac{t^{-m/4}}{\pi}\int_{0}^{\infty}|g_m(t,x,y)|\,dy. \label{eq:L1reduce_gm}
\end{align}
Hence it suffices to prove
\begin{equation}\label{eq:goal_uniform_gm}
\sup_{t>0,\ x\ge 0}\ \int_{0}^{\infty}|g_m(t,x,y)|\,dy \;\le\;C_m.
\end{equation}

\medskip
\noindent\textbf{Step 1 (the case $x=0$).} We show that, for $x=0$,
\begin{equation}\label{eq:step1_goal}
\int_{0}^{\infty}\bigl|g_m(t,0,y)\bigr|\,dy \le C_m .
\end{equation}

We split the $y$-integral into $(0,1)$ and $(1,\infty)$.

\medskip
\noindent\underline{(i) The part $0<y<t^{\frac14}$.}
With the change of variables $s=k\,t^{1/4}$ and $z=y\,t^{-1/4}$, we obtain:

\begin{equation}
\begin{aligned}
\int_{0}^{1}\bigl|g_m(t,0,y)\bigr|\,dy
&= \int_{0}^{1}\Bigl|\int_{0}^{\infty} As^{m}e^{-s^{4}}
\PhiK(sz)\,ds\Bigr|\,dz,  \\
&\le \int_{0}^{1}\int_{0}^{\infty} 3A\,s^{m}e^{-s^{4}}\,ds\,dz. \label{eq:step1_small_2}
\end{aligned}
\end{equation}

\medskip
\noindent\underline{(ii) The part $y>t^{\frac14}$.}
Again using the same $s$ and $z$, we write
\begin{equation}\label{eq:step1_large_1}
\int_{1}^{\infty}\bigl|g_m(t,0,y)\bigr|\,dy
= \int_{1}^{\infty}\Bigl|\int_{0}^{\infty} As^{m}e^{-s^{4}}
\PhiK(sz)\,ds\Bigr|\,dz .
\end{equation}
Integrating by parts twice in $s$, we get
\begin{equation}
\begin{aligned}
\int_{1}^{\infty}\bigl|g_m(t,0,y)\bigr|\,dy
&= \int_{1}^{\infty}\frac{A}{z^{2}}
\Bigl|\int_{0}^{\infty}\bigl(e^{-sz}-\sin(sz)+\cos(sz)\bigr)\,
\bigl(s^{m}e^{-s^{4}}\bigr)''\,ds\Bigr|\,dz,\\
&\le \int_{1}^{\infty}\frac{A}{z^{2}}
\int_{0}^{\infty} 3\,p_m(s)\,e^{-s^{4}}\,ds\,dz. \label{eq:step1_large_3}
\end{aligned}
\end{equation}
where $p_m$ is a polynomial depending only on $m$.
Combining \eqref{eq:step1_small_2} and \eqref{eq:step1_large_3} yields \eqref{eq:step1_goal}.

\medskip
\noindent\textbf{Step 2 (the case $t\rightarrow0^+$).} We show that, for each $x>0$,
\begin{equation}\label{eq:step2_goal}
\lim_{t\to 0^+}\int_{0}^{\infty}\bigl|g_m(t,x,y)\bigr|\,dy \le C_m .
\end{equation}
Expanding the product inside $g_m$, we decompose
\begin{equation}\label{eq:ABC_decomp}
\int_{0}^{\infty}\bigl|g_m(t,x,y)\bigr|\,dy
\le \int_{0}^{\infty}\Bigl(|A_m(t,x,y)|+|B_m(t,x,y)|+|C_m(t,x,y)|\Bigr)\,dy,
\end{equation}
where
\begin{align}
A_m(t,x,y)
&:= \int_{0}^{\infty}(k t^{1/4})^{m}e^{-k^{4}t}
\bigl((-1)^me^{-k(x+y)}-\sin(k(x+y)+\tfrac{m\pi}{2})\bigr)\,dk,
\label{eq:A_def} \\[6pt]
B_m(t,x,y)
&:= \int_{0}^{\infty}(k t^{1/4})^{m}e^{-k^{4}t}
\cos\bigl(k(x-y)+\tfrac{m\pi}{2}\bigr)\,dk,
\label{eq:B_def} \\[6pt]
C_m(t,x,y)
&:= \int_{0}^{\infty}(k t^{1/4})^{m}e^{-k^{4}t}
\Bigl(e^{-ky}(\sin(kx+\tfrac{m\pi}{2})-\cos(kx+\tfrac{m\pi}{2})) \nonumber\\
&\qquad\qquad
+(-1)^me^{-kx}(\sin(ky)-\cos(ky))\Bigr)\,dk.
\label{eq:C_def}
\end{align}

\noindent\underline{Estimate for $A$.}
By employing the change of variables $s = k t^{1/4}$ and $z = (x+y)t^{-1/4}$ into \eqref{eq:A_def},
we obtain:
\begin{align}
&\lim_{t\to 0^+}\int_{0}^{\infty}|A_m(t,x,y)|\,dy\nonumber \\[6pt] 
&\qquad= \lim_{t\to 0^+}\int_{x/t^{1/4}}^{\infty}
\Bigl|\int_{0}^{\infty}s^{m}e^{-s^{4}}\bigl(e^{-zs}-\sin(zs+\frac{m\pi}2)\bigr)\,ds\Bigr|\,dz,\label{eq:A_limit_2} \\[6pt] 
&\qquad\le \lim_{t\to 0^+}\int_{x/t^{1/4}}^{\infty}\int_{0}^{\infty}
\frac{2}{z^{2}}\,p_m(s)\,e^{-s^{4}}\,ds\,dz
=0. \nonumber 
\end{align}
Hence $\lim_{t\to 0^+}\int_{0}^{\infty}|A_m(t,x,y)|\,dy=0$.

\medskip
\noindent\underline{Estimate for $B$.} By employing the change of variables $s = k t^{1/4}$ and $z = (y-x)t^{-1/4}$ into \eqref{eq:B_def},
we obtain:
\begin{align}
\int_{0}^{\infty}|B_m(t,x,y)|\,dy
&=\int_{-\frac{x}{t^{1/4}}}^{\infty}
\Bigl|\int_{0}^{\infty}s^{m}e^{-s^{4}}\cos(sz+\frac{m\pi}2)\,ds\Bigr|\,dz .
\label{eq:B_L1_rewrite}
\end{align}
Therefore,
\begin{align}
\lim_{t\to 0^+}\int_{0}^{\infty}|B_m(t,x,y)|\,dy
&=\int_{-\infty}^{\infty}
\Bigl|\int_{0}^{\infty}s^{m}e^{-s^{4}}\cos(sz+\frac{m\pi}2)\,ds\Bigr|\,dz
\le C_m .
\label{eq:B_L1_limit}
\end{align}

\medskip
\noindent\underline{Estimate for $C$.} Integrating by parts twice in $k$ for the oscillatory terms in \eqref{eq:C_def}, we write:
\begin{align}
&|C_m(t,x,y)|
\le \Bigl|t^{m/4}\int_{0}^{\infty}\frac{1}{y^{2}}
\bigl(-\sin(ky)+\cos(ky)\bigr)\,\Bigl(k^{m}e^{-k^{4}t}(-1)^me^{-kx}\Bigr)''\,dk\Bigr|
\nonumber\\[6pt]
&\quad+\Bigl|t^{m/4}\int_{0}^{\infty}\frac{1}{y^{2}}
\bigl(-e^{-ky}\bigr)\,\Bigl(k^{m}\bigl(e^{-k^{4}t}(\sin(kx +\frac{m\pi}2)-\cos(kx+\frac{m\pi}2)\bigr)\Bigr)''\,dk\Bigr|.\label{eq:C_ibp_2}
\end{align}
Hence, for $y\ge 1$,
\begin{equation}\label{eq:C_y_ge_1}
|C_m(t,x,y)|\le \frac{F_1(x)}{y^{2}},
\qquad\text{uniformly for } t\le T,\ x>0,
\end{equation}
and therefore $C(t,x,\cdot)\in L^{1}([1,\infty))$ uniformly in $t\le T$.

\medskip
\noindent For $0<y\le 1$, we use the (one-step) integration-by-parts decomposition:
\begin{align}
&|C_m(t,x,y)|
\le t^{m/4}\Bigl|\int_{0}^{\infty} C\,k^{m}e^{-kx}\,dk\Bigr|\nonumber \\[6pt]
&\quad + t^{m/4}\Bigl|\int_{0}^{\infty}\frac{1}{x}\,(-\cos(kx +\frac{m\pi}2)-\sin(kx +\frac{m\pi}2))\,\Bigl(k^{m}e^{-k^{4}t}e^{-ky}\Bigr)'\,dk\Bigr|.
\end{align}

and consequently,
\begin{equation}
\begin{aligned}
|C_m(t,x,y)|
&\le \frac{C_{1}}{x^{m+1}}
+\Bigl|\int_{0}^{\infty}\frac{2}{x}\,y e^{-ky}(k t^{1/4})^{m}e^{-k^{4}t}\,dk\Bigr|\\
&+\Bigl|\int_{0}^{\infty}\frac{C_1'}{x}(k t^{1/4})^{m-1}e^{-k^{4}t}\,t^{1/4}\,dk\Bigr|
\\
&+\Bigl|\int_{0}^{\infty}\frac{C_2'}{x}(k t^{1/4})^{m+3}e^{-k^{4}t}\,t^{1/4}\,dk\Bigr|
\le\frac{C_{1}}{x^{m+1}}+\frac{C_{2}}{x}.
\label{eq:C_small_y_bound_2}
\end{aligned}
\end{equation}
uniformly for $t\le T,\ x>0$.
Therefore, $C(t,x,\cdot)\in L^{1}([0,1])$ uniformly in $t\le T$.

\medskip
\noindent Finally, by the dominated convergence theorem,
\begin{equation}\label{eq:C_DCT_y}
\lim_{t\to 0^+}\int_{0}^{\infty}|C_m(t,x,y)|\,dy
=\int_{0}^{\infty}\lim_{t\to 0^+}|C_m(t,x,y)|\,dy .
\end{equation}
Moreover, a dominating bound of the type
\begin{equation}\label{eq:C_dominate_sketch}
|C_m(t,x,y)|
\le \int_{0}^{\infty} A\,k^{m}\bigl(e^{-ky}+e^{-kx}\bigr)\,dk
\end{equation}
ensures that we may also pass the limit inside the $k$-integral:
\begin{equation}
\begin{aligned}
&\lim_{t\to 0^+}|C_m(t,x,y)|  \\
&= 
\int_{0}^{\infty}\lim_{t\to 0^+}(k t^{1/4})^{m}e^{-k^{4}t}
\Bigl(e^{-ky}(\sin(kx+\frac{m\pi}2)-\cos(kx+\frac{m\pi}2)) \label{eq:C_DCT_k} \\[6pt]
&\qquad+e^{-kx}(\sin(ky)-\cos(ky))\Bigr)\,dk .
\end{aligned}
\end{equation}

If $m>0$, then $\lim_{t\to 0^+}(k t^{1/4})^{m}e^{-k^{4}t}=0$ for each fixed $k>0$, 

If $m=0$, then
\begin{equation}\label{eq:C_limit_mzero_integral}
\int_{0}^{\infty}\Bigl(e^{-ky}(\sin(kx)-\cos(kx))+e^{-kx}(\sin(ky)-\cos(ky))\Bigr)\,dk=0,
\end{equation}
thus $\lim_{t\to 0^+}\int_{0}^{\infty}|C_m(t,x,y)|\,dy=0$.
Therefore, combining the estimates for $A_m, B_m$ and $C_m$, we obtain \eqref{eq:step2_goal}.

\medskip
\noindent\textbf{Step 3.} For each fixed $x = x_0 >0$, we show that
\begin{equation}\label{eq:step3_goal}
\int_{0}^{\infty}\bigl|g_m(t,x_0,y)\bigr|\,dy \le C_m
\qquad\text{for } t\ge t_0 .
\end{equation}
Let $s=t^{1/4}k$ and $z=y\,t^{-1/4}$. Then we can rewrite the tail part ($y\ge t^{\frac14}$) of the \eqref{eq:step3_goal} as
\begin{equation}
\int_{1}^{\infty}\bigl|g_m(t,x_0,y)\bigr|\,dy \nonumber\\
= \int_{1}^{\infty}\Bigl|\int_{0}^{\infty} s^{m}e^{-s^{4}}
\PhiK^{(m)}(\frac{sx_0}{t^{1/4}})\,
\PhiK(sz)\,ds\Bigr|\,dz .
\label{eq:step3_tail_rewrite}
\end{equation}
Applying integration by parts twice in $s$ (as in Step 1), we obtain a bound of the form
\begin{align}
\int_{t^{\frac14}}^{\infty}\bigl|g_m(t,x_0,y)\bigr|\,dy
&= \int_{1}^{\infty}\frac{1}{z^{2}}
\Bigl(C_1\Bigl(\frac{x_0}{t^{1/4}}\Bigr)^{2}
+ C_2\Bigl(\frac{x_0}{t^{1/4}}\Bigr)+C_3\Bigr)\,dz
\notag\\
&\le C_1\Bigl(\frac{x_0}{t_0^{1/4}}\Bigr)^{2}
+ C_2\Bigl(\frac{x_0}{t_0^{1/4}}\Bigr)+C_3,
\label{eq:step3_tail_bound}
\end{align}
where the last inequality uses $t\ge t_0>0$ and fixed $x>0$.

For the near part ($0\le y\le t^{\frac14}$), we use the crude bound $|\Phi(\cdot)|\le 3$ to get
\begin{align}
\int_{0}^{t^{\frac14}}\bigl|g_m(t,x_0,y)\bigr|\,dy
&\le \int_{0}^{1}\int_{0}^{\infty} B\,s^{m}e^{-s^{4}}\,ds\,dz.
\label{eq:step3_near_bound}
\end{align}
Combining \eqref{eq:step3_tail_bound} and \eqref{eq:step3_near_bound} yields \eqref{eq:step3_goal}.

\medskip
\noindent\textbf{Step 4.} For each fixed $x = x_0>0$, we have
\begin{equation}\label{eq:step4_goal}
\int_{0}^{\infty}\bigl|g_m(t,x_0,y)\bigr|\,dy \le C_m
\qquad \text{for all } t>0.
\end{equation}
Moreover, the map
\begin{equation}\label{eq:step4_continuity}
t \longmapsto \int_{0}^{\infty}\bigl|g_m(t,x_0,y)\bigr|\,dy
\end{equation}
is continuous on $(0,\infty)$, and thus \eqref{eq:step4_goal} follows from Step~2 and Step~3.

\medskip
\noindent\textbf{Step 5.}
Define $\widetilde{g}_m\!\left(\frac{x}{t^{1/4}},\,z\right):= \int_{0}^{\infty} s^{m}e^{-s^{4}}
\PhiK^{(m)}(\frac{sx_0}{t^{1/4}})\,
\PhiK(sz)\,ds$. And we use this scaling representation to write
\begin{equation}\label{eq:step5_scaling}
\int_{0}^{\infty}\bigl|g_m(t,x,y)\bigr|\,dy
= \int_{0}^{\infty}\bigl|\widetilde{g}_m\!\left(\frac{x}{t^{1/4}},\,z\right)\bigr|\,dz .
\end{equation}
Thus,
\begin{equation}\label{eq:step5_conclusion}
\int_{0}^{\infty}\bigl|g_m(t,x,y)\bigr|\,dy \le C_m
\qquad\text{for all } x>0,\ t>0.
\end{equation}
Together with Step~1, this completes the proof of the lemma.
\end{proof}

 We can derive the following corollaries. Corollary~\ref{cor:mass_one} and \ref{cor:mass_consentration} follow from Step~2 in the proof of Lemma~1.
\medskip
\medskip
\begin{corollary}\label{cor:mass_one}
For each $x>0$,
\begin{equation}\label{eq:mass_one}
\lim_{t\to 0^+}\int_{0}^{\infty} K(t,x,y)\,dy = 1 .
\end{equation}
\end{corollary}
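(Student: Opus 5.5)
We follow the decomposition from Step~2 in the proof of Lemma~\ref{lem:L1bound_dxK}, taking $m=0$ and keeping the signed integrals instead of absolute values. With $m=0$,
\begin{equation*}
\Phi(kx)\Phi(ky)=\bigl(e^{-k(x+y)}-\sin(k(x+y))\bigr)+\cos(k(x-y))+\Bigl(e^{-ky}\bigl(\sin kx-\cos kx\bigr)+e^{-kx}\bigl(\sin ky-\cos ky\bigr)\Bigr).
\end{equation*}
This is checked by expanding the product. The cross term $-\sin kx\cos ky-\cos kx\sin ky$ combines to $-\sin(k(x+y))$, and $\sin kx\sin ky+\cos kx\cos ky=\cos(k(x-y))$. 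Hence
\begin{equation*}
\pi\int_0^\infty K(t,x,y)\,dy=\int_0^\infty A_0\,dy+\int_0^\infty B_0\,dy+\int_0^\infty C_0\,dy,
\end{equation*}
with $A_0,B_0,C_0$ as in \eqref{eq:A_def}--\eqref{eq:C_def}. All three are absolutely integrable in $y$ by the bounds established in that proof.

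Step~2 already shows that $\int_0^\infty|A_0|\,dy\to0$ and $\int_0^\infty|C_0|\,dy\to0$ as $t\to0^+$ for fixed $x>0$. The first follows from \eqref{eq:A_limit_2}, since the lower limit $x/t^{1/4}\to\infty$ and the integrand is $O(z^{-2})$. The second follows from dominated convergence together with \eqref{eq:C_limit_mzero_integral}. Consequently the signed integrals of $A_0$ and $C_0$ also tend to $0$, and only $B_0$ contributes to the limit.

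For $B_0$ we use the substitution $s=kt^{1/4}$, $z=(y-x)t^{-1/4}$ from \eqref{eq:B_L1_rewrite}, without absolute values:
\begin{equation*}
\int_0^\infty B_0\,dy=\int_{-x/t^{1/4}}^{\infty}h(z)\,dz,\qquad h(z):=\int_0^\infty e^{-s^4}\cos(sz)\,ds.
\end{equation*}
The integration by parts in Step~1 shows $|h(z)|\lesssim\min(1,z^{-2})$, so $h\in L^1(\mathbb{R})$. By dominated convergence the right-hand side tends to $\int_{-\infty}^\infty h(z)\,dz$. This quantity is $\pi$ by Fourier inversion. Indeed, $h(z)=\tfrac12\int_{\mathbb{R}}e^{-s^4}e^{isz}\,ds$ because the integrand is even, so $h=\pi\,\check\psi$ with $\psi(s)=e^{-s^4}$, where $\check\psi(z)=\frac1{2\pi}\int\psi(s)e^{isz}ds$. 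Then $\int_{\mathbb{R}}h(z)\,dz=\pi\,\psi(0)=\pi$. Dividing by $\pi$ gives \eqref{eq:mass_one}.

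The step that needs the most care is the interchange of the $y$- and $k$-integrals, i.e.\ justifying that the signed integrals of $A_0$, $B_0$ and $C_0$ separately make sense and add up to $\pi\int_0^\infty K\,dy$. This holds because, for fixed $t>0$, the integrand $e^{-k^4t}\Phi(kx)\Phi(ky)$ defines each piece as an absolutely convergent $k$-integral. The $L^1(dy)$ bounds for each piece come from the two-fold integrations by parts in the proof of Lemma~\ref{lem:L1bound_dxK}, which give $O(y^{-2})$ decay. The remaining inputs are the Fourier inversion identity for $e^{-s^4}$ and the uniform dominating bound for $h$, both of which are standard.
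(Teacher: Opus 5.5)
Your proposal is correct and follows the paper's own argument: you split $\pi K$ into $A_0+B_0+C_0$ as in Step~2 of Lemma~\ref{lem:L1bound_dxK}, use that the $A_0$ and $C_0$ contributions vanish as $t\to0^+$, and evaluate the limit of the $B_0$ contribution. Beyond the paper, you supply the Fourier-inversion computation $\int_{\mathbb{R}}h=\pi$, and you track the $1/\pi$ normalization correctly; the paper's one-line proof writes $\int_0^\infty B_0\,dy\to 1$ and $\int_0^\infty K\,dy=\int_0^\infty(A_0+B_0+C_0)\,dy$, two slips by a factor of $\pi$ that happen to cancel.
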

\begin{proof}
As shown in Step~2 of the Lemma~1, the integral of $A_0$ and $C_0$ vanish when $t \rightarrow 0^+$, and $\lim_{t\to 0^+}\int_0 ^{\infty} B_0\,dy = 1$. Thus, $\lim_{t\to 0^+}\int_0 ^{\infty} K\,dy \,=\,\lim_{t\to 0^+}\int_0 ^{\infty} (A_0 + B_0+C_0)\,dy \, = 1$.
\end{proof}

\begin{corollary}\label{cor:mass_consentration}
For each $x,\, \delta>0$,
\begin{equation}\label{eq:mass_one2}
\lim_{t\to0^+}\int_{\,y-x\,>\delta}|K(t,x,y)|\,dy=0.
\end{equation}
\end{corollary}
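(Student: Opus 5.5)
We will prove Corollary~\ref{cor:mass_consentration} by reusing the decomposition $K=\frac1\pi(A_0+B_0+C_0)$ from Step~2 of the proof of Lemma~\ref{lem:L1bound_dxK}, with $m=0$. This gives
\begin{equation*}
\int_{y-x>\delta}|K(t,x,y)|\,dy \le \frac1\pi\Bigl(\int_0^\infty|A_0|\,dy+\int_0^\infty|C_0|\,dy+\int_{y-x>\delta}|B_0|\,dy\Bigr).
\end{equation*}
Step~2 already shows that the first two integrals tend to $0$ as $t\to0^+$ for each fixed $x>0$. Indeed, the $A_0$-integral is bounded by $\int_{x t^{-1/4}}^\infty C z^{-2}\,dz\to0$. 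The $C_0$-integral tends to $0$ by dominated convergence together with the identity \eqref{eq:C_limit_mzero_integral}. So the whole task reduces to the translation-invariant piece $B_0$.

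For $B_0$ we use the same change of variables $s=kt^{1/4}$, $z=(y-x)t^{-1/4}$ as in \eqref{eq:B_L1_rewrite}. Set
\begin{equation*}
b(z):=\int_0^\infty e^{-s^4}\cos(sz)\,ds .
\end{equation*}
The region $y-x>\delta$ becomes $z>\delta t^{-1/4}$, so
\begin{equation*}
\int_{y-x>\delta}|B_0(t,x,y)|\,dy=\int_{\delta t^{-1/4}}^\infty |b(z)|\,dz .
\end{equation*}
Integrating by parts twice in $s$, exactly as in Step~1(ii), gives $|b(z)|\le C z^{-2}$ for $z\ge1$. The boundary terms vanish here: $\sin(sz)$ vanishes at $s=0$, and $(e^{-s^4})'=-4s^3e^{-s^4}$ vanishes at $s=0$. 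Hence the tail integral is at most $C\,t^{1/4}/\delta$, which tends to $0$ as $t\to0^+$.

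Combining the three pieces proves \eqref{eq:mass_one2}. The only delicate point is keeping the limits uniform. For $A_0$ and $C_0$ we need the full-line integrals, not just the region $y-x>\delta$, to vanish, and this is exactly what Step~2 provides.

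A naturally stronger statement would also control the region $x-y>\delta$. That region is bounded ($0\le y<x-\delta$) and is handled by the same $B_0$ bound with $z<-\delta t^{-1/4}$, using that $b$ is even. This is not needed for the statement as written. If one prefers not to rely on the dominated-convergence argument for $C_0$, the pointwise bounds \eqref{eq:C_y_ge_1} and \eqref{eq:C_small_y_bound_2} still give a dominating function on $\{y-x>\delta\}$. I expect the main obstacle to be making rigorous that $C_0(t,x,y)\to0$ pointwise and is dominated uniformly in $t$, since that step in Step~2 is sketched rather than fully justified.
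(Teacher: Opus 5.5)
Your proposal is correct and follows the paper's proof essentially verbatim. Both use the same splitting $K=\frac1\pi(A_0+B_0+C_0)$, the Step~2 vanishing of the $A_0$ and $C_0$ contributions, and the rescaled tail $\int_{\delta t^{-1/4}}^\infty |b(z)|\,dz\to0$ for $B_0$; the paper restricts $A_0$ to $z>(2x+\delta)t^{-1/4}$, which is cosmetic, and your choice $z=(y-x)t^{-1/4}$ corrects a harmless sign slip there.

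The $C_0$ step you flag closes exactly as in the paper. Pointwise convergence $C_0(t,x,y)\to0$ follows from dominated convergence in $k$ with majorant $2(e^{-kx}+e^{-ky})$, together with the identity \eqref{eq:C_limit_mzero_integral}. The bounds $|C_0|\le C/x$ on $(0,1)$ and $|C_0|\le F_1(x)/y^2$ on $[1,\infty)$ then give an integrable dominating function in $y$. In the second bound, the $\pm 1/y$ and $\pm x/y^2$ boundary terms from the two integrations by parts cancel exactly between the two pieces of $C_0$.
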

\begin{proof}
As in Step~2 of Lemma~1 (cf.\ the argument in Lemma~2), we treat the terms
$A_0,B_0,C_0$ separately.

For the $A_0$-term, using the change of variables $s=k\,t^{1/4}$ and
$z=(x+y)/t^{1/4}$, we obtain
\begin{equation}
\lim_{t\to 0^+}\int_{y-x>\delta} |A_0|\,dy
\le
\lim_{t\to 0^+}\int_{(2x+\delta)/t^{1/4}}^{\infty}\int_{0}^{\infty}
\frac{2}{z^{2}}\,p_m(s)\,e^{-s^{4}}\,ds\,dz
=0.
\end{equation}

For the $B_0$-term, with the change of variables $s=k\,t^{1/4}$ and
$z=(x-y)/t^{1/4}$, we have
\begin{equation}
\lim_{t\to 0^+}\int_{y-x>\delta} |B_0|\,dy
\le
\lim_{t\to 0^+}\int_{\delta/t^{1/4}}^{\infty}
\left|\int_{0}^{\infty} e^{-s^{4}}\cos(sz)\,ds\right|\,dz
=0.
\end{equation}

Finally, $C_0\to 0$ as $t\to 0^+$. Hence, by the dominated convergence
argument used in Step~2 of Lemma~1,
\begin{equation}
\lim_{t\to 0^+}\int_{y-x>\delta} |C_0|\,dy = 0.
\end{equation}
This completes the proof of the corollary.
\end{proof}

\medskip

\begin{lemma}\label{lem:Linfbound_dxK}
For each integer $m\ge 0$, there exists a constant $C'_m>0$, 
independent of $t>0$ and $x\ge0$, such that
\begin{equation}\label{eq:L1bound_statement2}
\bigl|\partial_x^{\,m}K(t,x,y)\bigr| 
\;\le\; C'_m\,t^{-(m+1)/4}.
\end{equation} 
\end{lemma}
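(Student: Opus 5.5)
We bound the kernel pointwise directly from its spectral representation; no cancellation is needed. Differentiating \eqref{eq:K_def} under the integral sign in $x$, exactly as in \eqref{eq:dxmK_before_scaling}, gives
\begin{equation*}
\partial_x^{\,m}K(t,x,y)=\frac1\pi\int_{0}^{\infty} k^{m}e^{-k^{4}t}\,\PhiK^{(m)}(kx)\,\PhiK(ky)\,dk .
\end{equation*}
The differentiation is justified for $t>0$ by dominated convergence. For $r\ge0$ the derivatives of $\PhiK$ satisfy
\begin{equation*}
\PhiK^{(m)}(r)=(-1)^m e^{-r}+\sin\bigl(r+\tfrac{m\pi}{2}\bigr)-\cos\bigl(r+\tfrac{m\pi}{2}\bigr).
\end{equation*}
Hence $|\PhiK^{(m)}(r)|\le 3$ uniformly in $r\ge 0$ and $m\ge0$, because $0<e^{-r}\le 1$ on the half-line. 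This supplies the integrable dominating function $3\cdot 3\,k^m e^{-k^4 t}$, which is independent of $x$ and $y$.

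Using these bounds inside the integral yields
\begin{equation*}
\bigl|\partial_x^{\,m}K(t,x,y)\bigr|\le \frac{9}{\pi}\int_0^\infty k^m e^{-k^4 t}\,dk .
\end{equation*}
Next substitute $s=k\,t^{1/4}$, which gives $dk=t^{-1/4}ds$ and $k^m=t^{-m/4}s^m$. Then
\begin{equation*}
\int_0^\infty k^m e^{-k^4 t}\,dk = t^{-(m+1)/4}\int_0^\infty s^m e^{-s^4}\,ds=\tfrac14\,\Gamma\bigl(\tfrac{m+1}{4}\bigr)\,t^{-(m+1)/4}.
\end{equation*}
This proves the claim with $C'_m=\frac{9}{4\pi}\Gamma\bigl(\frac{m+1}{4}\bigr)$. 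The constant is independent of $t>0$, of $x\ge0$, and also of $y\ge0$.

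The only point needing care is the justification of differentiation under the integral and the uniform boundedness of $\PhiK^{(m)}$ on $[0,\infty)$. Both are straightforward here: the Gaussian-type factor $e^{-k^4t}$ dominates every polynomial weight $k^m$, and the argument of $\PhiK^{(m)}$ is nonnegative, so the exponential term is harmless. I do not expect any genuine obstacle. This contrasts with Lemma~\ref{lem:L1bound_dxK}, where integrability in $y$ forced integration by parts; the sup-norm bound comes simply from the scaling of the frequency integral.
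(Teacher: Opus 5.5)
Your proof is correct and is essentially the paper's argument: the paper also bounds the integrand pointwise, using $|\Phi^{(m)}|,|\Phi|\le 3$ implicitly in its constant $M$. It then rescales $s=kt^{1/4}$ to write $\partial_x^m K=\pi^{-1}t^{-(m+1)/4}\widetilde g_m$ with $|\widetilde g_m|\le M\int_0^\infty s^m e^{-s^4}\,ds$. Your version makes the constant explicit, $C'_m=\frac{9}{4\pi}\Gamma\bigl(\frac{m+1}{4}\bigr)$, and justifies differentiating under the integral sign, which the paper leaves implicit.
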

\begin{proof}
 The function $g_m$ satisfies
\begin{equation}
 g_m(t,x,y) = t^{-1/4}\widetilde{g}_m\left(\frac{x}{t^{1/4}}, z\right), \label{eq:scaling_gm}
\end{equation}

Therefore, the profile function $\widetilde{g}_m$ is uniformly bounded:
\begin{equation}
\bigl|\widetilde{g}_m\left(\frac{x}{t^{1/4}}, z\right)\bigr| \le  M \int_0^\infty s^m e^{-s^4} ds =: C'_m.
\end{equation}
Substituting this into \eqref{eq:L1bound_statement2}, we obtain the desired estimate. And
this completes the proof.
\end{proof}

The kernel $K(t,x,y)$ we use is not translation-invariant in the sense that it does not depend only on $x-y$.
Therefore, in proving $L^1$ boundedness (see Corollary~\ref{cor:Estimation10}) or the preservation of regularity (see Corollary~\ref{cor:idff}),
we cannot rely on the standard argument for convolution kernels.
Instead, we perform integration by parts and use the uniform boundedness of the modified kernels
$K_a$, $K_b$, and $K_c$ whose definitions are given in Lemma~\ref{lem:3}.

\begin{lemma}\label{lem:3}
For $r \ge 0$, let
\begin{equation}\label{eq:Phi_defs}
\begin{aligned}
\Phi_1(r) &:= e^{-r}-\sin r-\cos r,\\
\Phi_2(r) &:= e^{-r}-\sin r+\cos r,\\
\Phi_3(r) &:= e^{-r}+\sin r+\cos r.
\end{aligned}
\end{equation}

For $t>0$, $x\ge0$, $y\ge0$, define the kernels
\begin{equation}\label{eq:Kabc_defs}
\begin{aligned}
K_a(t,x,y)
&:= \frac{1}{\pi}\int_{0}^{\infty} e^{-k^{4}t}\,
   \Phi_3(kx)\,\Phi_1(ky)\,dk,\\
K_b(t,x,y)
&:= \frac{1}{\pi}\int_{0}^{\infty} e^{-k^{4}t}\,
   \Phi_2(kx)\,\Phi_2(ky)\,dk,\\
K_c(t,x,y)
&:= \frac{1}{\pi}\int_{0}^{\infty} e^{-k^{4}t}\,
   \Phi_1(kx)\,\Phi_3(ky)\,dk.
\end{aligned}
\end{equation}
For each integer $m\ge 0$, there exist constants $C_{a,m},\, C_{b,m},\,C_{c,m}>0$,
independent of $t>0$ and $x\ge0$, such that
\begin{equation}\label{eq:3-3}
\begin{aligned}
\int_{0}^{\infty} \bigl|\partial_x^{m} K_a(t,x,y)\bigr|\,dy
&\le  C_{a,m}\,t^{-m/4}, \\[4pt]
\int_{0}^{\infty} \bigl|\partial_x^{m} K_b(t,x,y)\bigr|\,dy
&\le  C_{b,m}\,t^{-m/4}, \\[4pt]
\int_{0}^{\infty} \bigl|\partial_x^{m} K_c(t,x,y)\bigr|\,dy
&\le  C_{c,m}\,t^{-m/4}.
\end{aligned}
\end{equation}

\end{lemma}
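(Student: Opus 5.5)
The plan is to repeat the argument of Lemma~\ref{lem:L1bound_dxK} with $\Phi$ replaced by $\Phi_1,\Phi_2,\Phi_3$. The only features of $\Phi$ that proof used are the following. First, $\Phi$ and all its derivatives are bounded (by $3$). Second, $\Phi^{(m)}(r)$ is a linear combination of $e^{-r}$, $\sin(r+\tfrac{m\pi}{2})$ and $\cos(r+\tfrac{m\pi}{2})$ with bounded coefficients. Third, the product $\Phi(kx)\Phi(ky)$ splits into a sum-frequency part (terms in $k(x+y)$), a difference-frequency part (terms in $k(x-y)$), and mixed exponential--trigonometric terms. Each $\Phi_j$ has the same structure: $\Phi_j(r)=e^{-r}+\alpha_j\sin r+\beta_j\cos r$ with $\alpha_j,\beta_j\in\{\pm1\}$. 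Since $\partial_x^m$ acting on $\Phi_j(kx)$ gives $k^m\Phi_j^{(m)}(kx)$, differentiating does not change this structure either.

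Concretely, I will write $\partial_x^m K_\ast=\tfrac{t^{-m/4}}{\pi}g^\ast_m$ with
\[
g^\ast_m(t,x,y)=\int_0^\infty (kt^{1/4})^m e^{-k^4t}\Phi_i^{(m)}(kx)\Phi_j(ky)\,dk ,
\]
where $(i,j)=(3,1),(2,2),(1,3)$. The goal is $\sup_{t,x}\int_0^\infty|g^\ast_m|\,dy\le C$. I will then carry out Steps~1--5 of Lemma~\ref{lem:L1bound_dxK} as follows.

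\textbf{Step 1 ($x=0$).} For $z=yt^{-1/4}\le1$, use the crude bound $|\Phi_i^{(m)}|,|\Phi_j|\le3$. For $z\ge1$, integrate by parts twice in $s$ to gain $z^{-2}$.

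\textbf{Step 2 ($t\to0^+$).} Expand the product into $A,B,C$-type pieces.
\begin{itemize}
\item The $A$-piece (functions of $s(x+y)t^{-1/4}$) is handled by integrating by parts twice, which gives decay $z^{-2}$ on $z\ge x t^{-1/4}$.
\item The $B$-piece is a Fourier integral in $(y-x)t^{-1/4}$ whose $L^1$ norm is a fixed constant. Its symbol $s^me^{-s^4}$ is Schwartz, so the Fourier integral is Schwartz in $z$.
\item The mixed $C$-piece (an exponential times a trigonometric function) is bounded by $F(x)/y^2$ for $y\ge1$ and by $C_1x^{-m-1}+C_2x^{-1}$ for $y\le1$, after the same one- and two-step integrations by parts.
\end{itemize}
The signs $\alpha_j,\beta_j$ change only the phases, not the bounds.

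\textbf{Steps 3--5.} Step~3 gives the bound for $t\ge t_0$ by the same tail/near split. Step~4 uses continuity in $t$. Step~5 uses the scaling representation: $\int|g^\ast_m|\,dy$ depends only on $xt^{-1/4}$. Together these give the uniform bound.

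The main obstacle is the boundary terms in the integrations by parts at $s=0$ (equivalently $k=0$). For $\Phi$, the proof relied on $\Phi(0)=0$, so that the first boundary term vanishes, together with $\Phi'(0)=2$ being harmless. Now $\Phi_1(0)=0$ and $\Phi_3(0)=2$, and $\Phi_2(0)=2$ with $\Phi_2'(0)=-2$. When the $y$-factor is $\Phi_2$ or $\Phi_3$, integrating by parts in $s$ against $\Phi_j(sz)$ produces boundary terms of size $z^{-1}[s^m e^{-s^4}\Phi_i^{(m)}(sx')]_{s=0}$. These are nonzero only when $m=0$, and they are not integrable at infinity.

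To handle them, I would first try integrating by parts using an antiderivative chosen to vanish at $0$. For example, write $\Phi_j(sz)=\frac{d}{ds}\bigl[\Psi_j(sz)/z\bigr]$ with $\Psi_j(r)=\int_0^r\Phi_j$, so the boundary term at $s=0$ is zero. Integrating by parts a second time then produces $\int_0^r\Psi_j$, which grows only linearly in $r$. That growth is compensated by the extra factor $1/z$ and by the decay of $(s^me^{-s^4}\Phi_i^{(m)})''$, but this needs checking. If it fails, the fallback is to isolate the non-oscillatory $k\approx0$ contribution explicitly. For $m=0$ that contribution is $\int_0^\infty e^{-s^4}\,2\,(e^{-sz}\pm\dots)\,ds$, and I would verify directly that the combination over the three terms of $\Phi_j$ yields a function of $z$ decaying like $z^{-2}$. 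This is the same cancellation that appears in \eqref{eq:C_limit_mzero_integral}.
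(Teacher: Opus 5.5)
Your plan is to rerun Steps 1--5 of Lemma 1 with $\Phi^{(m)}(kx)\Phi(ky)$ replaced by $\Phi_i^{(m)}(kx)\Phi_j(ky)$, which is exactly what the paper does; its proof is the single sentence ``the argument follows Lemma 1''. However, the reason you give for why the transfer works is false: the signs do \emph{not} merely change phases. For $m=0$ the uniform $L^1$ bound depends on $1/y$ tails cancelling, and that cancellation is sign-specific. For example, $\frac1\pi\int_0^\infty e^{-k^4t}\Phi_3(kx)\Phi_3(ky)\,dk$ at $x=0$ behaves like $\frac{4}{\pi y}$ as $y\to\infty$, so it is not integrable.

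What actually has to be verified is the following. Write $\Phi_\ell(r)=e^{-r}+\alpha_\ell\sin r+\beta_\ell\cos r$. Each pair $(i,j)\in\{(3,1),(2,2),(1,3)\}$, like $(\Phi,\Phi)$, satisfies $\alpha_j=-\beta_i$ and $\beta_j=-\alpha_i$. Expanding the product with these identities gives three facts:
\begin{itemize}
\item The sum-frequency part is exactly $e^{-k(x+y)}-\sin k(x+y)$. This is the same $A$-piece as for $K$, and its scaled profile has no $1/z$ term.
\item The difference part is $\alpha_i\alpha_j\cos k(x-y)$.
\item The mixed part $e^{-kx}(\alpha_j\sin ky+\beta_j\cos ky)+e^{-ky}(\alpha_i\sin kx+\beta_i\cos kx)$ has $1/y$ tails with total coefficient $\alpha_j+\beta_i=0$.
\end{itemize}
For $m\ge1$ the factor $k^m$ removes these tails regardless of signs. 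Once this is checked, Steps 1--5 go through as for $K$.

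Your ``main obstacle'' is misdiagnosed, and the proposed remedy could not work even in principle. Integrate $\int_0^\infty f(s)\Phi_j(sz)\,ds$ by parts using the natural antiderivative $-e^{-r}-\alpha_j\cos r+\beta_j\sin r$. This produces the boundary term $(1+\alpha_j)f(0)/z$, where $f(0)=0$ for $m\ge1$ and $f(0)=\Phi_i(0)=1+\beta_i$ for $m=0$. This term vanishes for all three kernels:
\begin{itemize}
\item for $K_a$ and $K_b$, because $1+\alpha_1=1+\alpha_2=0$;
\item for $K_c$, because $\Phi_1(0)=0$. So even your own expression is zero when the $y$-factor is $\Phi_3$.
\end{itemize}

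Normalizing the antiderivative to $\Psi_j(r)=\int_0^r\Phi_j$ changes nothing. $\Psi_j$ differs from the natural antiderivative by the constant $1+\alpha_j$, which contributes $-\frac{1+\alpha_j}{z}\int_0^\infty f'(s)\,ds=\frac{(1+\alpha_j)f(0)}{z}$ back inside the integral. Equivalently, after the second integration by parts, the linear growth $(1+\alpha_j)r$ of $\int_0^r\Psi_j$ absorbs the extra factor $1/z$ and returns the same term.

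The coefficient $(1+\alpha_j)f(0)$ is the true leading asymptotic coefficient of the $z$-integral. If it were nonzero, no integration by parts could rescue the bound, and the lemma would be false. Your fallback, computing the $k\approx0$ contribution directly, is the right instinct. Carrying it out amounts precisely to checking the sign identities above.
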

\begin{proof}
The argument follows Lemma 1.
\end{proof}

\begin{corollary}[$L^1$, $L^\infty$ bounds, and smoothing]\label{cor:Estimation10}

Define:
\begin{equation}\label{eq:semigroup_def}
(S(t)f)(x):=\int_{0}^{\infty} K(t,x,y)\,f(y)\,dy,\qquad t>0,\ x>0 .
\end{equation}

For each integer $m\ge 0$ there exist constants $D_m,\, C_m ,\, C'_m>0$ such that for all $f\in L^{1}(\mathbb{R}_+) \cap  C(\mathbb{R}_+)$ and $t>0$,
\begin{align}
\|\partial_x^{\,m} S(t)f\|_{L^1(\mathbb{R}_+)} \nonumber
&\le D_m\, t^{{-m /4}}
\|f\|_{L^1(\mathbb{R}_+)}, \\[6pt] \label{eq:smoothing}
\|\partial_x^{\,m} S(t)f\|_{C(\mathbb{R}_+)}
&\le C_m\, t^{-m /4}
\|f\|_{C(\mathbb{R}_+)},\\[6pt] \nonumber
\|\partial_x^{\,m} S(t)f\|_{C(\mathbb{R}_+)}
&\le C'_m\, t^{- (m+1)/4}
\|f\|_{L^1(\mathbb{R}_+)}.
\end{align}
\end{corollary}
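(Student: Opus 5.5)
The plan is to prove the three inequalities separately. Each one reduces to a kernel bound already available: Lemma~\ref{lem:L1bound_dxK}, Lemma~\ref{lem:Linfbound_dxK}, or the modified-kernel estimates of Lemma~\ref{lem:3}. Throughout, I differentiate under the $y$-integral,
\begin{equation*}
\partial_x^m S(t)f(x)=\int_0^\infty \partial_x^m K(t,x,y)\,f(y)\,dy .
\end{equation*}
For fixed $t>0$ this is justified because $\partial_x^{m+1}K$ is bounded by $C t^{-(m+2)/4}$ (Lemma~\ref{lem:Linfbound_dxK}) and $f\in L^1$, so dominated convergence applies to difference quotients.

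\emph{The $C\to C$ bound.} This is immediate from Lemma~\ref{lem:L1bound_dxK}:
\begin{equation*}
|\partial_x^m S(t)f(x)|\le \|f\|_{C}\int_0^\infty|\partial_x^mK(t,x,y)|\,dy\le C_m t^{-m/4}\|f\|_{C},
\end{equation*}
uniformly in $x$. Continuity in $x$ follows from the same dominated convergence argument.

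\emph{The $L^1\to C$ bound.} This is equally direct from Lemma~\ref{lem:Linfbound_dxK}:
\begin{equation*}
|\partial_x^m S(t)f(x)|\le \sup_{y}|\partial_x^mK(t,x,y)|\,\|f\|_{L^1}\le C'_m t^{-(m+1)/4}\|f\|_{L^1}.
\end{equation*}

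\emph{The $L^1\to L^1$ bound.} By Fubini (Tonelli),
\begin{equation*}
\|\partial_x^m S(t)f\|_{L^1}\le \int_0^\infty |f(y)|\Bigl(\int_0^\infty|\partial_x^mK(t,x,y)|\,dx\Bigr)dy ,
\end{equation*}
so it suffices to show $\sup_{y\ge0}\int_0^\infty|\partial_x^mK(t,x,y)|\,dx\le D_m t^{-m/4}$. Here the lack of translation invariance matters, since the integration is now in the differentiated variable. I would use the algebraic identities
\begin{equation*}
\Phi'=-\Phi_1,\quad \Phi''=\Phi_2,\quad \Phi'''=-\Phi_3,\quad \Phi''''=\Phi,
\end{equation*}
which follow directly from \eqref{eq:Phi_def} and \eqref{eq:Phi_defs}. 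They give
\begin{equation*}
\partial_x^mK(t,x,y)=\pm\frac{1}{\pi}\int_0^\infty k^m e^{-k^4t}\,\Phi_{j(m)}(kx)\,\Phi(ky)\,dk,
\end{equation*}
with $\Phi_{j}\in\{\Phi,\Phi_1,\Phi_2,\Phi_3\}$ depending on $m \bmod 4$. After the scaling $s=kt^{1/4}$ this becomes $t^{-m/4}$ times a kernel of exactly the type treated in Lemma~\ref{lem:3}: a product of two of the functions $\Phi,\Phi_1,\Phi_2,\Phi_3$ with the weight $s^me^{-s^4}$. The only change is that the integrated variable is now the one carrying $\Phi_j$ and the fixed variable carries $\Phi$.

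I would then rerun the proof of Lemma~\ref{lem:L1bound_dxK} with the roles of $x$ and $y$ interchanged. For $m=0$ this is just the symmetry $K(t,x,y)=K(t,y,x)$. For general $m$, the argument uses only three ingredients, which I would verify in this order:
\begin{itemize}
\item the uniform bound $|\Phi_j|\le 3$;
\item the decomposition of the product into terms in $x+y$, in $x-y$, and mixed exponential--oscillatory terms;
\item two integrations by parts in $s$, yielding $z^{-2}$ decay in the scaled integrated variable.
\end{itemize}
These hold equally for every $\Phi_j$.

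The main obstacle is exactly this step, because the paper's Steps~2--5 are written for the specific pair $(\Phi^{(m)}(kx),\Phi(ky))$. One must check that the mixed "$C$"-type terms and the boundary terms produced by integrating by parts in $s$ are still controlled uniformly in the now-fixed variable $y$. If a direct repetition is awkward, my fallback is to note that the $s$-weight $s^me^{-s^4}$ is smooth and rapidly decaying. Each boundary term at $s=0$ is then a bounded multiple of $z^{-1}$ or $z^{-2}$ times a bounded quantity, which is integrable on $z\ge1$. The region $z\le1$ is handled by the crude bound $|\Phi_j|\le3$, exactly as in Step~1.
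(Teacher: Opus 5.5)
Your proposal is essentially the paper's proof: the $C\to C$ and $L^1\to C$ bounds come from Lemmas~\ref{lem:L1bound_dxK} and~\ref{lem:Linfbound_dxK}, and the $L^1\to L^1$ bound comes from Tonelli plus the modified kernels of Lemma~\ref{lem:3}. There is in fact no ``change of roles'' left to handle. By the symmetry $K(t,x,y)=K(t,y,x)$ your quantity $\sup_y\int_0^\infty|\partial_x^mK|\,dx$ equals $\sup_x\int_0^\infty|\partial_y^mK|\,dy$, and your identities together with $\Phi_3'=-\Phi$, $\Phi_2''=\Phi$, $\Phi_1'''=-\Phi$ give $\partial_y^{4j+1}K=\partial_x^{4j+1}K_a$, $\partial_y^{4j+2}K=\partial_x^{4j+2}K_b$, $\partial_y^{4j+3}K=\partial_x^{4j+3}K_c$, so Lemma~\ref{lem:3} applies verbatim and you can drop the fallback (which would fail anyway, since a $z^{-1}$ boundary term is not integrable on $[1,\infty)$).
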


\begin{proof}

\smallskip
\noindent\emph{$L^1$-smoothing from $L^1$.}
By Tonelli's theorem,
\begin{equation} \label{eq:loco}
\int_0^\infty\Big|\int_0^\infty \partial_x^{\,m}K(t,x,y)f(y)\,dy\Big|dx\\
\le \int_0^\infty |f(y)|\Big(\int_0^\infty |\partial_x^{\,m}K(t,x,y)|\,dx\Big)dy.
\end{equation}

By the symmetry property of the kernel $K$,
\begin{equation} \label{eq:symm}
\sup_{y\ge0}\int_0^\infty |\partial_x^{\,m}K(t,x,y)|\,dx
=\sup_{x\ge0}\int_0^\infty |\partial_y^{\,m}K(t,x,y)|\,dy.
\end{equation}

The derivatives of the kernel $K$ with respect to $y$ can be effectively replaced by the $x$-derivatives of the modified kernels $K_a, K_b$ and $K_c$. Specifically, for any $j \in \mathbb{N}_0$, we have the following relations:\begin{equation}\label{eq:kernel_relations}
\begin{aligned}
\partial_y^{4j} K(t,x,y)
&= \partial_x^{4j} K(t,x,y), \\[6pt]
\partial_y^{4j+1} K(t,x,y)
&= \partial_x^{4j+1} K_a(t,x,y),\\[6pt]
\partial_y^{4j+2} K(t,x,y)
&= \partial_x^{4j+2} K_b(t,x,y), \\[6pt]
\partial_y^{4j+3} K(t,x,y)
&= \partial_x^{4j+3} K_c(t,x,y).
\end{aligned}
\end{equation}

Applying \eqref{eq:kernel_relations} into \eqref{eq:loco}, and Lemma~\ref{lem:L1bound_dxK} and \ref{lem:3} yield:
\begin{equation} 
\int_0^\infty |\partial_x^{\,m}K(t,x,y)|\,dx \le D_m\,t^{-m/4}
\qquad \text{for all } y\ge0.
\end{equation}
Therefore,
\begin{equation} \label{eq:l234}
\|\partial_x^{\,m}S(t)f\|_{L^1(\mathbb{R}_+)}\le D_m\,t^{-m/4}\|f\|_{L^1(\mathbb{R}_+)}.
\end{equation}

\noindent\emph{$C$-smoothing from $C$.}
 By Lemma~\ref{lem:L1bound_dxK},
\begin{equation}
|\partial_x^{\,m}S(t)f|
\le \|f\|_{C(\mathbb{R}_+)}\int_0^\infty |\partial_x^{\,m}K(t,x,y)|\,dy
\le C_m\,t^{-m/4}\|f\|_{C(\mathbb{R}_+)}.
\end{equation}

\smallskip
\noindent\emph{$C$-smoothing from $L^1$.}
By Lemma~\ref{lem:Linfbound_dxK},
\begin{equation}
|\partial_x^{\,m}S(t)f|
\le \int_0^\infty |\partial_x^{\,m}K(t,x,y)|\,|f(y)|\,dy
\le C'_m\,t^{-(m+1)/4}\|f\|_{L^1(\mathbb{R}_+)}.
\end{equation}
This completes the proof.
\end{proof}

\begin{corollary}[Regularity preservation]\label{cor:idff}
Let $S(t)$ be defined by \eqref{eq:semigroup_def}. Fix $m\ge 0$.
Assume the compatibility condition:
\begin{equation}\label{eq:compat_4m}
\partial_x^{4j}f(0)=0\ (j=0,\dots,m),\qquad 
\partial_x^{4j+1}f(0)=0\ (j=0,\dots,m-1),
\end{equation}
and, when stated, the additional condition
\begin{equation}\label{eq:compat_4m_plus1}
\partial_x^{4m+1}f(0)=0.
\end{equation}

There exists $C>0$, independent of $t>0$, such that the following hold:

\medskip
\noindent\emph{(i) If $f\in C^{4m+1}(\mathbb R_+)\cap W^{4m+1,1}(\mathbb R_+)$ and \eqref{eq:compat_4m} holds, then}
\begin{align}
\|\partial_x^{4m+1}S(t)f\|_{C(\mathbb R_+)} \le C\|f\|_{C^{4m+1}(\mathbb R_+)},\,
\|\partial_x^{4m+1}S(t)f\|_{L^1(\mathbb R_+)} \le C\|f\|_{W^{4m+1,1}(\mathbb R_+)}. \label{eq:reg_4m+1}
\end{align}

\medskip
\noindent\emph{(ii) If $f\in C^{4m+2}(\mathbb R_+)\cap W^{4m+2,1}(\mathbb R_+)$ and \eqref{eq:compat_4m}--\eqref{eq:compat_4m_plus1} hold, then}
\begin{align}
\|\partial_x^{4m+2}S(t)f\|_{C(\mathbb R_+)} \le C\|f\|_{C^{4m+2}(\mathbb R_+)},\,
\|\partial_x^{4m+2}S(t)f\|_{L^1(\mathbb R_+)} \le C\|f\|_{W^{4m+2,1}(\mathbb R_+)}. \label{eq:reg_4m+2}
\end{align}

\medskip
\noindent\emph{(iii) If $f\in C^{4m+3}(\mathbb R_+)\cap W^{4m+3,1}(\mathbb R_+)$ and \eqref{eq:compat_4m}--\eqref{eq:compat_4m_plus1} hold, then}
\begin{align}
\|\partial_x^{4m+3}S(t)f\|_{C(\mathbb R_+)} \le C\|f\|_{C^{4m+3}(\mathbb R_+)},\,
\|\partial_x^{4m+3}S(t)f\|_{L^1(\mathbb R_+)} \le C\|f\|_{W^{4m+3,1}(\mathbb R_+)}. \label{eq:reg_4m+3}
\end{align}

\medskip
\noindent\emph{(iv) If $f\in C^{4m+4}(\mathbb R_+)\cap W^{4m+4,1}(\mathbb R_+)$ and \eqref{eq:compat_4m}--\eqref{eq:compat_4m_plus1} hold, then}
\begin{align}
\|\partial_x^{4m+4}S(t)f\|_{C(\mathbb R_+)} \le C\|f\|_{C^{4m+4}(\mathbb R_+)},\,
\|\partial_x^{4m+4}S(t)f\|_{L^1(\mathbb R_+)} \le C\|f\|_{W^{4m+4,1}(\mathbb R_+)}. \label{eq:reg_4m+4}
\end{align}
\end{corollary}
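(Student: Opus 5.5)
The strategy is to move all $x$-derivatives onto $f$ by integrating by parts in $y$. The tools are the relations \eqref{eq:kernel_relations}, their analogues for $K_a,K_b,K_c$, and the $t$-independent $L^1$ bounds of Lemma~\ref{lem:L1bound_dxK} and Lemma~\ref{lem:3} with $m=0$.

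\textbf{Step 1: the kernel identities.} Since $\Phi'=-\Phi_1$, $\Phi_1'=-\Phi_2$, $\Phi_2'=-\Phi_3$ and $\Phi_3'=-\Phi$ (up to the exact sign pattern, which I would check), we have
\[
\partial_x K=-\partial_y K_a\quad\text{(up to sign)},\qquad
\partial_x K_a=\pm\partial_y K_b,\qquad
\partial_x K_b=\pm\partial_y K_c,\qquad
\partial_x K_c=\pm\partial_y K,
\]
together with $\partial_x^4K=\partial_y^4K$. Each identity is read off the $k$-integral: one factor of $k$ is moved from $\Phi(kx)$ to the $y$-variable.

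\textbf{Step 2: handle the $\partial_x^{4m}$ part.} Write $\partial_x^{4m+r}S(t)f=\partial_x^r\int \partial_y^{4m}K\,f\,dy$ and integrate by parts $4m$ times in $y$. The boundary terms at $y=\infty$ vanish because $f\in W^{4m,1}$ and the kernel and its derivatives are bounded (Lemma~\ref{lem:Linfbound_dxK}), so the products tend to zero. The boundary terms at $y=0$ have the form $\partial_y^{i}K(t,x,0)\,\partial_y^{4m-1-i}f(0)$.

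At $y=0$ we know $\Phi(0)=\Phi'(0)=0$; more generally $\Phi^{(j)}(0)=0$ unless $j\equiv 2,3 \pmod 4$. Hence $\partial_y^iK(t,x,0)$ vanishes for $i\equiv 0,1\pmod 4$. The surviving terms therefore pair $i\equiv 2,3\pmod4$ with derivatives of $f$ of order $4m-1-i\equiv 1,0\pmod 4$, and these derivatives of $f$ vanish by \eqref{eq:compat_4m}. This gives
\[
\partial_x^{4m}S(t)f=S(t)\partial^{4m}f .
\]

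\textbf{Step 3: peel off the remaining $r\le4$ derivatives.} Use Step 1 once per derivative. For the first one,
\[
\partial_x S(t)g=\pm\int \partial_yK_a\, g\,dy=\mp K_a(t,x,0)g(0)\mp\int K_a\,g'\,dy ,
\]
where the boundary term vanishes because $g(0)=\partial^{4m}f(0)=0$. The second derivative brings in $K_b$ with boundary term $K_b(t,x,0)g'(0)$, and this requires \eqref{eq:compat_4m_plus1}. The third brings in $K_c$ with boundary term $K_c(t,x,0)g''(0)$, which is fine since $\Phi_1(0)=0$. The fourth returns to $K$ with boundary term $K(t,x,0)g'''(0)=0$ since $\Phi(0)=0$.

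Finally, each of $\int|K|\,dy$, $\int|K_a|\,dy$, $\int|K_b|\,dy$, $\int|K_c|\,dy$ is bounded uniformly in $t$ and $x$ by Lemma~\ref{lem:L1bound_dxK} and Lemma~\ref{lem:3}, which gives the $C$ bounds. The $L^1$ bounds need the $x$-integrals instead; by the symmetry $K_a(t,x,y)=K_c(t,y,x)$, $K_b$ symmetric, and Lemma~\ref{lem:3}, these are also uniformly bounded.

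\textbf{Main obstacle.} The hard part is bookkeeping: the exact signs, and verifying that every boundary term at $y=0$ is killed by either a vanishing of $\Phi_j(0)$ or a compatibility condition. I note $\Phi_2(0)=2\ne0$ and $\Phi_3(0)=2\ne0$, so the case analysis must be checked carefully. The step in Step 2 also silently requires $f$'s derivatives of order $\equiv 2,3\pmod 4$ to have no boundary contribution; I would recheck this against the actual values of $\Phi^{(j)}(0)$, where the conditions on orders $4j$ and $4j+1$ are what must suffice. The decay at infinity needed for the integrations by parts is routine from $f\in W^{n,1}\cap C^n$.
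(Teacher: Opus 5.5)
Your proposal is correct and is essentially the paper's argument: you move all $x$-derivatives onto $f$ by integrating by parts in $y$, kill the boundary terms at $y=0$, and close with the $m=0$ uniform $L^1$ bounds of Lemma~\ref{lem:L1bound_dxK} and Lemma~\ref{lem:3}, plus $K_a(t,x,y)=K_c(t,y,x)$ for the $L^1$ norms; your staged bookkeeping also makes explicit a point the paper glosses over, namely which boundary terms vanish because of the compatibility conditions and which because of the kernel traces $\Phi^{(j)}(0)=0$ for $j\equiv 0,1 \pmod 4$, $\Phi_1(0)=0$ and $\Phi(0)=0$. One labeling fix is needed: with the definitions in Lemma~\ref{lem:3} the chain reads $\partial_xK=\partial_yK_c$, $\partial_xK_c=\partial_yK_b$, $\partial_xK_b=\partial_yK_a$, $\partial_xK_a=\partial_yK$, all with plus signs. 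So $K_a$ and $K_c$ are swapped in your Step~1, as they are in the paper's own display, and the third-stage kernel that vanishes at $y=0$ is $K_a$; this is harmless because Lemma~\ref{lem:3} treats $K_a$ and $K_c$ alike.
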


\begin{proof}
We differentiate under the integral sign. As in \eqref{eq:kernel_relations},
$x$-derivatives of the kernel can be expressed in terms of the corresponding
$y$-derivatives of $K$ and of the modified kernels $K_a$, $K_b$, and $K_c$.

\begin{equation}\label{eq:smoothing3}
\begin{aligned}
\partial_x^{4m+1}(S(t)f)(x)
  &= \int_{0}^{\infty} \partial_y^{4m+1}K_a(t,x,y)\,f(y)\,dy, \\
\partial_x^{4m+2}(S(t)f)(x)
  &= \int_{0}^{\infty} \partial_y^{4m+2}K_b(t,x,y)\,f(y)\,dy, \\
\partial_x^{4m+3}(S(t)f)(x)
  &= \int_{0}^{\infty} \partial_y^{4m+3}K_c(t,x,y)\,f(y)\,dy, \\
\partial_x^{4m+4}(S(t)f)(x)
  &= \int_{0}^{\infty} \partial_y^{4m+4}K(t,x,y)\,f(y)\,dy .
\end{aligned}
\end{equation}

We differentiate under the integral sign and repeatedly integrate by parts in $y$. As $y \to \infty$, the kernels $K$, $K_a$, $K_b$, and $K_c$,
as well as the kernel derivatives appearing in the integration-by-parts
procedure, decay to zero by the Riemann--Lebesgue lemma.

At the boundary $y=0$, the kernels and their derivatives do not necessarily vanish. Nevertheless, each boundary term produced by integration by parts contains a factor of the form $\partial_y^m f(0)$ for some $m \ge 0$. The compatibility conditions \eqref{eq:compat_4m}--\eqref{eq:compat_4m_plus1}
guarantee that all such boundary traces vanish, and therefore no boundary
contribution remains at $y=0$. 

Consequently, no boundary contribution appears in the integration-by-parts argument. Thus,
\begin{equation}\label{eq:smoothing3_ibp}
\begin{aligned}
\partial_x^{4m+1}(S(t)f)(x)
&= - \int_{0}^{\infty} {K_a}(t,x,y)\,\partial_y^{4m+1}f(y)\,dy, \\
\partial_x^{4m+2}(S(t)f)(x)
&= \int_{0}^{\infty} {K_b}(t,x,y)\,\partial_y^{4m+2}f(y)\,dy, \\
\partial_x^{4m+3}(S(t)f)(x)
&= -\int_{0}^{\infty} {K_c}(t,x,y)\,\partial_y^{4m+3}f(y)\,dy, \\
\partial_x^{4m+4}(S(t)f)(x)
&= \int_{0}^{\infty} K(t,x,y)\,\partial_y^{4m+4}f(y)\,dy .
\end{aligned}
\end{equation}

By Lemma~\ref{lem:L1bound_dxK} and \ref{lem:3}, we complete the proof.
\end{proof}




\begin{prop}[Uniqueness]\label{prop:halfline_kernel_unique}
Assume that $u,v$ are two solutions to \eqref{eq:prob} on $[0,T]$ such that
\begin{equation}
u,v\in C([0,T]; W^{2,1}(\mathbb R_+) \cap C^4(\mathbb R_+)),
\end{equation}
Then $u\equiv v$ on $[0,T]\times\mathbb R_+$.
\end{prop}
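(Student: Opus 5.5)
We will use an energy argument on the difference $w:=u-v$. By linearity, $w$ solves \eqref{eq:prob} with zero initial datum $w(0,\cdot)=0$ and clamped boundary conditions $w(t,0)=w_x(t,0)=0$. It also inherits the regularity $w\in C([0,T];W^{2,1}(\mathbb R_+)\cap C^4(\mathbb R_+))$. In particular $w$, $w_x$, $w_{xx}$ are bounded and integrable in $x$, so $w\in L^2$, $w_{xx}\in L^2$, and $w(t,\cdot)\in L^1\cap L^\infty$.

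The plan is to show that
\[
E(t):=\tfrac12\int_0^\infty w(t,x)^2\,dx
\]
is nonincreasing. Multiply the equation $w_t=-w_{xxxx}$ by $w$, integrate over $x\in(0,R)$, and integrate by parts twice:
\[
\int_0^R w\,w_{xxxx}\,dx=\bigl[w\,w_{xxx}-w_x\,w_{xx}\bigr]_0^R+\int_0^R w_{xx}^2\,dx .
\]
The clamped conditions $w(t,0)=w_x(t,0)=0$ kill the boundary term at $x=0$, regardless of the (uncontrolled) values of $w_{xx}(t,0)$ and $w_{xxx}(t,0)$. This is the key point of the argument.

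For the boundary term at $x=R$, we need $w\,w_{xxx}$ and $w_x w_{xx}$ to vanish along a sequence $R_n\to\infty$. Here $w,w_{xxx}$ are bounded and $w\in L^1$, and likewise $w_x\in L^1$ with $w_{xx}$ bounded. Hence $|w w_{xxx}|+|w_xw_{xx}|\in L^1(\mathbb R_+)$, so its liminf at infinity is zero and we may choose $R_n\to\infty$ along which the boundary term tends to $0$. If uniformity in $t$ is required, we first integrate the identity in time over $[0,t]$ and only then select $R_n$, using that $\int_0^t\!\int_0^\infty(|ww_{xxx}|+|w_xw_{xx}|)\,dx\,ds<\infty$. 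Letting $n\to\infty$ gives
\[
E(t)-E(0)=-\int_0^t\!\int_0^\infty w_{xx}^2\,dx\,ds\le 0 .
\]
Since $E(0)=0$, we conclude $w\equiv0$.

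The main obstacle is the justification of the time derivative, and this is the step I expect to need the most care. The hypothesis controls $w$ only as a continuous curve in space, not $w_t$ directly, so the identity should be derived in integrated-in-time form. I would use $w_t=-w_{xxxx}$ together with $w_{xxxx}\in C([0,T];C(\mathbb R_+))$ (from the $C^4$ hypothesis) to write $w(t)-w(s)=-\int_s^t w_{xxxx}\,d\tau$ pointwise. Then $E(t)-E(0)$ can be expressed as a double integral to which Fubini applies, since the integrand is bounded times integrable. If an additional integrability issue arises for $w_{xxxx}\,w$ at infinity (only boundedness of $w_{xxxx}$ is given), I would work on $(0,R)$ throughout and pass $R\to\infty$ using $w\in L^1$ and the boundedness of $w_{xxxx}$. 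Should the energy route fail for lack of decay, a fallback is a duality argument: pair $w(t)$ against $S(t-s)\varphi$ for compactly supported smooth $\varphi$ satisfying the compatibility conditions, and use Corollary~\ref{cor:Estimation10} and Corollary~\ref{cor:idff} to control the derivatives of the test function.
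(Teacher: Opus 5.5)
Your proposal is correct and follows the same energy argument as the paper. Both set $E(t)=\tfrac12\int_0^\infty w^2\,dx$, integrate by parts twice using $w(t,0)=w_x(t,0)=0$, and obtain $E(t)-E(0)=-\int_0^t\int_0^\infty w_{xx}^2\,dx\,ds\le 0$, hence $w\equiv 0$. Your extra care fills in justifications the paper leaves implicit: you derive the identity in time-integrated form, and you remove the boundary term at infinity using integrability of $ww_{xxx}$ and $w_xw_{xx}$, where the paper simply invokes decay of $w$ and $w_x$. The duality fallback is not needed.
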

\begin{proof}

Suppose that there exist two solutions $u$ and $v$
to \eqref{eq:prob} on $[0,T]$ with same initial datum, 
Set $w := u - v$, then $w \in C\big([0,T];W^{2,1}(\mathbb R_+) \cap C^{4}(\mathbb{R}_+)\big)$, $w(0,x)=0$, and $w$ satisfies the clamped boundary conditions:\
\begin{equation}
w(t,0)=0, \quad w_x(t,0)=0,  \quad t\in[0,T].
\end{equation}
Moreover, $w(t,\cdot)$ and $w_x(t,\cdot)$ vanish at infinity for each $t > 0$.
Set the energy as follows, \begin{equation}
 E(t) = \int_0^\infty \frac{w^2  } {2}dx  .
 \end{equation}

Differentiate the $E(t)$ with respect to the $t$ (
$w(t,\cdot)\in L^1(\mathbb{R}_+)\cap L^\infty(\mathbb{R}_+)$ and
$w_{xxxx}(t,\cdot)\in L^\infty(\mathbb{R}_+)$),

 \begin{equation}
 \begin{aligned}
  \frac{dE(t)}{dt}
 =\int_0^\infty (w\, w_t )\,dx = - \int_0^\infty (w\, w_{xxxx} )\,dx.
 \label{eq:abc}
 \end{aligned}
 \end{equation}

 Since $w \in C\big([0,T];W^{2,1}(\mathbb R_+) \cap C^4(\mathbb{R}_+)\big)$, the integration by parts in \eqref{eq:abc} is justified. Furthermore, as all boundary terms vanish due to the clamped boundary conditions and decay properties, \eqref{eq:abc} reduces to:

  \begin{equation}
 \begin{aligned}
  \frac{dE(t)}{dt} = - \int_0^\infty (w_{xx})^2\,dx \le 0.
 \label{eq:abc2}
 \end{aligned}
 \end{equation}

 Thus, $E(t) \le E(0) = 0$ and $u = v$ on $[0,T]$.
 \end{proof}

\begin{corollary}[Semigroup property]\label{cor:Semigroup}

Let $f\in  L^{1}(\mathbb R_+) \cap C(\mathbb R_+)$. Then for any $\tau,s>0$ and $x>0$, operator $S(t)$ of \eqref{eq:semigroup_def} satisfies:
\begin{equation}\label{eq:semigroup_property}
S(\tau)\big(S(s)f\big)(x) = S(\tau+s)f(x).
\end{equation}
\end{corollary}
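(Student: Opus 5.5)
Fix $s>0$ and set $g:=S(s)f$. The plan is to identify both sides of \eqref{eq:semigroup_property}, as functions of $\tau$, with solutions of \eqref{eq:prob} that have the same initial datum $g$, and then to invoke the uniqueness result, Theorem~\ref{prop:halfline_kernel_unique}.

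Define
\[
v(\tau,x):=S(\tau)g(x), \qquad w(\tau,x):=S(\tau+s)f(x), \qquad \tau\ge 0,
\]
with $v(0,\cdot)=g$ and $w(0,\cdot)=g$.

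\textbf{Step 1: regularity of $g$.} By Corollary~\ref{cor:Estimation10}, every derivative of $g$ is bounded and lies in $L^1$, since $\|\partial_x^m g\|_{L^1}\le D_m s^{-m/4}\|f\|_{L^1}$ and $\|\partial_x^m g\|_{C}\le C_m s^{-m/4}\|f\|_{C}$. Hence $g\in C^\infty\cap W^{m,1}$ for every $m$. By Theorem~\ref{prop:halfline_kernel}, $g(0)=g_x(0)=0$.

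\textbf{Step 2: $w$ solves \eqref{eq:prob}.} This follows directly from Theorem~\ref{prop:halfline_kernel}: the equation $w_\tau=-w_{xxxx}$ and the clamped boundary conditions hold, and $w(\tau,\cdot)\to g$ uniformly as $\tau\to0^+$. The uniform convergence comes from continuity of the explicit $k$-integral in $t$ on $[s/2,\infty)$, using dominated convergence with the factor $e^{-k^4 s/2}$.

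\textbf{Step 3: $v$ solves \eqref{eq:prob}.} Theorem~\ref{prop:halfline_kernel} applied to $g$ gives the equation, the boundary conditions, and pointwise initial convergence.

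\textbf{Step 4: membership in the uniqueness class.} We must show $v,w\in C([0,T];W^{2,1}\cap C^4)$.
\begin{itemize}
\item For $w$, this follows from the smoothing estimates and continuity in time of the kernel away from $t=0$.
\item For $v$, the main obstacle is continuity at $\tau=0$ in the $W^{2,1}\cap C^4$ norms. Here I would use Corollary~\ref{cor:idff}. Its compatibility conditions $\partial_x^{4j}g(0)=0$ and $\partial_x^{4j+1}g(0)=0$ hold because $\partial_x^{4j}g=(-\partial_s)^j g$ and $\partial_x^{4j+1}g=(-\partial_s)^j\partial_x g$, and these vanish at $x=0$ by the boundary conditions applied to $S(s)f$. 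The representation \eqref{eq:smoothing3_ibp} then gives $\partial_x^{j}v(\tau)=\pm S_\ast(\tau)\partial_x^j g$, where $S_\ast$ is one of the kernel operators built from $K,K_a,K_b,K_c$.
\end{itemize}

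\textbf{Step 5: continuity at $\tau=0$.} The operators $S_\ast$ are uniformly bounded on $L^1$ and $C$ by Lemma~\ref{lem:L1bound_dxK} and Lemma~\ref{lem:3}. They also satisfy approximate-identity properties analogous to Corollary~\ref{cor:mass_one} and Corollary~\ref{cor:mass_consentration}, by the same Step~2 argument. Together these give strong continuity: $S_\ast(\tau)h\to h$ as $\tau\to0^+$, in $L^1$ by density, and locally uniformly for continuous $h$. Uniform convergence on all of $\mathbb R_+$ is the delicate point. If it fails, I would instead modify the energy argument of Theorem~\ref{prop:halfline_kernel_unique} directly: it only needs $E(\tau)\to0$ as $\tau\to0^+$, and this follows from $L^1$ convergence together with uniform $L^\infty$ bounds, since $\|h\|_{L^2}^2\le\|h\|_{L^1}\|h\|_{L^\infty}$.

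\textbf{Step 6: conclusion.} With both $v$ and $w$ in the uniqueness class, Theorem~\ref{prop:halfline_kernel_unique} gives $v\equiv w$, which is \eqref{eq:semigroup_property}.

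An alternative, more direct route would be to compute $\int_0^\infty K(\tau,x,y)K(s,y,z)\,dy$ via an orthogonality (Plancherel-type) relation $\int_0^\infty\Phi(ky)\Phi(k'y)\,dy=\pi\delta(k-k')$. Justifying that relation is more delicate, so I prefer the uniqueness approach above.
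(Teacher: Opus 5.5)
Your proposal is correct and follows the same route as the paper: set $g=S(s)f$, note $g(0)=g_x(0)=0$, use the smoothing estimates and Corollary~\ref{cor:idff} to place both $S(\tau)g$ and $S(\tau+s)f$ in the class $C([0,T];W^{2,1}(\mathbb R_+)\cap C^4(\mathbb R_+))$, and conclude with the energy uniqueness result, Theorem~\ref{prop:halfline_kernel_unique}. Your extra work on continuity at $\tau=0$ fills in a step the paper only asserts. In particular, your fallback is sound: the energy argument only needs $E(\tau)\to 0$, which follows from $L^1$ convergence together with uniform $L^\infty$ bounds.
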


\begin{proof}
Fix $s>0$ and set
\begin{equation}\label{eq:sg_def}
g:=S(s)f .
\end{equation}
By the smoothing property of Corollary~\ref{cor:Estimation10}, we have $g\in  W^{2,1}(\mathbb R_+) \cap C_{4}(\mathbb R_+)$. 
Define:
\begin{equation}\label{eq:u_v_def}
\begin{aligned}
u(t,x)
&:= S(t)g(x) = S(t)\bigl(S(s)f\bigr)(x),\\[6pt]
v(t,x)
&:= S(t+s)f(x), \qquad t\in[0,\tau].
\end{aligned}
\end{equation}

Then $u$ and $v$ solve \eqref{eq:prob} on $[0,\tau]$ and share the same initial data at $t=0$:
\begin{equation}\label{eq:same_initial}
u(0,x)=g(x)=v(0,x).
\end{equation}
Moreover, by Lemma~\ref{lem:L1bound_dxK} and Corollary~\ref{cor:idff} (noting that $g$ satisfies the compatibility conditions, $g(0) = g_x(0) = 0$),
\begin{equation}\label{eq:u_v_reg}
u,v\in C([0,T];W^{2,1}(\mathbb R_+) \cap C_{4}(\mathbb R_+)).
\end{equation}

Therefore, Theorem~\ref{prop:halfline_kernel_unique} (uniqueness) yields
\begin{equation}\label{eq:semigroup_identity_local}
S(\tau)\big(S(s)f\big)(x)=S(\tau+s)f(x).
\end{equation}

\end{proof}

\subsection{Existence and Uniqueness of the Solution}
\begin{prop}\label{prop:exe}
Consider the restricted fourth-order Prandtl equation \eqref{eq:a-eq}
\begin{equation}
a_t = - a_{yyyy} +a^2 - a_y\int_0^y ady,
\label{eq:a}
\end{equation}

with the following conditions:
\begin{equation} \label{eq:b}
\begin{cases}
a(0, y) = a_0, \quad a_0 \in C^6(\mathbb R_+) \cap W^{4,1}(\mathbb R_+), \\[6pt]
a(t, 0) = 0, \quad a_y(t, 0) = 0, \\[6pt]
\lim_{y\to\infty} a_{y^{(m)}}(t,y) = 0, \quad  0\le m \le 3. \\[6pt]
\partial_y^{m} a_0(0) = 0, \quad  m = 0, \,1,\, 4,\, 5.
\end{cases} 
\end{equation}
Then, there exists a $T > 0$ such that a unique solution $a$ exists, satisfying
$a \in C([0,T]; C^6(\mathbb R_+)) \cap C([0,T]; W^{4,1}(\mathbb R_+))$.

\end{prop}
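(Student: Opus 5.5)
We will prove the theorem by a contraction-mapping argument for the Duhamel formulation
\begin{equation*}
a(t)=S(t)a_0+\int_0^t S(t-\tau)\,N(a(\tau))\,d\tau,\qquad N(a):=a^2-a_y\int_0^y a(\eta)\,d\eta,
\end{equation*}
with $S(t)$ defined by \eqref{eq:semigroup_def}. The working space will be
\begin{equation*}
X_T:=\{a\in C([0,T];C^6(\mathbb R_+))\cap C([0,T];W^{4,1}(\mathbb R_+)):\ \|a\|_{X_T}\le 2R\},
\end{equation*}
where $\|a\|_{X_T}:=\sup_{t\le T}\bigl(\|a(t)\|_{C^6}+\|a(t)\|_{W^{4,1}}\bigr)$ and $R$ is comparable to $\|a_0\|_{C^6}+\|a_0\|_{W^{4,1}}$.

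\textbf{Step 1: the linear part.} Since $a_0$ satisfies the compatibility conditions $\partial_y^m a_0(0)=0$ for $m=0,1,4,5$, Corollary~\ref{cor:idff} applies with $m=1$ in part (ii), and with $m=0$ in parts (i)--(iv) for the lower orders. Together with Corollary~\ref{cor:Estimation10} for $m=0$, this bounds $\|S(t)a_0\|_{C^6}+\|S(t)a_0\|_{W^{4,1}}$ uniformly in $t$ by $C\bigl(\|a_0\|_{C^6}+\|a_0\|_{W^{4,1}}\bigr)$. Continuity in time follows from the semigroup property (Corollary~\ref{cor:Semigroup}) and the approximation argument of Theorem~\ref{prop:halfline_kernel}(iii). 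Applying the same argument to derivatives of $a_0$ gives the convergence $S(t)a_0\to a_0$ in the required norms.

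\textbf{Step 2: the nonlinearity.} For the Duhamel term we avoid compatibility conditions on $N(a)$ and instead use the smoothing estimates of Corollary~\ref{cor:Estimation10}. Write $\partial_y^6$ as $\partial_y^{j}$ applied to $\partial_y^{6-j}N$. Putting up to three derivatives on the kernel costs $(t-\tau)^{-j/4}$ with $j\le 3$, which is integrable. The remaining derivatives land on $N(a)$, so $N(a)$ must be controlled in $C^{3}$ and $W^{1,1}$ using only the $C^6\cap W^{4,1}$ norm of $a$.

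The nonlocal factor $\int_0^y a$ is bounded by $\|a\|_{L^1}$, which is why the $W^{4,1}$ part of the norm is needed. Its derivatives are derivatives of $a$. This gives
\begin{align*}
\|N(a)\|_{C^k}&\lesssim \|a\|_{C^{k+1}}\bigl(\|a\|_{C^k}+\|a\|_{L^1}\bigr),\\
\|N(a)\|_{W^{k,1}}&\lesssim \|a\|_{W^{k+1,1}}\bigl(\|a\|_{C^k}+\|a\|_{L^1}\bigr).
\end{align*}
Here we use $|a_y\int_0^y a|\le \|a\|_{L^1}|a_y|$ together with a Leibniz expansion.

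The top order is the delicate point. If we put up to three derivatives on the kernel, we need $N(a)\in C^3$ for the $C^6$ norm, which is fine. For the $W^{4,1}$ norm, three kernel derivatives leave $N(a)\in W^{1,1}$, which is also fine. The Duhamel integral is then bounded by $C\,T^{1/4}\|a\|_{X_T}^2$. Since $N$ is bilinear, the difference estimate $\|\Phi(a)-\Phi(b)\|_{X_T}\le C\,T^{1/4}R\,\|a-b\|_{X_T}$ follows in the same way, where $\Phi$ denotes the right-hand side of the Duhamel formula.

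\textbf{Main obstacle.} Moving derivatives between the kernel and $N$ is not a convolution identity. It requires either the kernel relations \eqref{eq:kernel_relations} with the integration by parts of Corollary~\ref{cor:idff}, which needs boundary traces of $N$ to vanish, or a bound like $\|\partial_x^{j}S(t)\partial^{i}g\|$ that is not available directly. Note that $N(a)(0)=0$ and $\partial_yN(a)(0)=0$, because $a(0)=a_y(0)=0$. Hence $N(a)$ satisfies the compatibility conditions for part (i) with $m=0$: one derivative can be transferred through $K_a$, and two through $K_b$ once $\partial_y N(a)(0)=0$ is checked.

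First we will try to split $\partial_y^6=\partial_y^{3}\partial_y^{3}$. We take three derivatives on the kernel via Corollary~\ref{cor:Estimation10}, paying $(t-\tau)^{-3/4}$. For the other three, we use Corollary~\ref{cor:idff}(i)--(iii) with $m=0$ applied to $N(a)$. This requires $\partial_y^4N(a)(0)=0$ and $\partial_y^5N(a)(0)=0$ only at higher orders, so $m=0$ suffices. Applying the semigroup property at the intermediate time $(t-\tau)/2$ makes this splitting legitimate.

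\textbf{Step 3: conclusion.} Choosing $T$ small gives a contraction on $X_T$ and hence a unique mild solution. Differentiating the Duhamel formula, with the time derivative justified by Theorem~\ref{prop:halfline_kernel}, shows that the mild solution is classical. The boundary conditions hold because $K(t,0,y)=K_x(t,0,y)=0$, and the decay at infinity follows from Theorem~\ref{prop:halfline_kernel}(iv). Uniqueness in the stated class follows by applying the energy argument of Theorem~\ref{prop:halfline_kernel_unique} to the difference of two solutions, estimating the nonlinear terms with Gronwall's inequality. Alternatively, one can show that every classical solution is a mild solution and invoke the contraction.
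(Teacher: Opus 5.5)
Your proposal is correct in outline, but it is organized differently from the paper. The paper runs the fixed point in the weak space $X_T$ ($C^1\cap W^{4,1}$ with $a(t,0)=\partial_ya(t,0)=0$ built in). It moves only one derivative onto $N(a)$, through $K_a$, to bound $\|\partial_y^4S(t-\tau)N(a)\|_{L^1}$. It then reaches $C^6$ by a separate linear bootstrap: $\partial_y^2,\partial_y^3$ go directly on the kernel, $\partial_y^4,\partial_y^5$ go through $S_b$ acting on $\partial_y^2N(a)$, and $\partial_y^6$ goes through $\partial_y^2S\,\partial_y^4N(a)$.

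You instead contract directly in $C^6\cap W^{4,1}$. This works because $\|\partial_y^{3+j}S(s)N\|_{C}\lesssim s^{-3/4}\|\partial_y^jN\|_{C}$ for $j=1,2,3$ once $N(a)(0)=\partial_yN(a)(0)=0$, and $\|N(a)\|_{C^3}\lesssim\|a\|_{C^4}(\|a\|_{C^3}+\|a\|_{L^1})$. Your route saves the bootstrap, but there are trade-offs:
\begin{itemize}
\item Your existence time depends on $\|a_0\|_{C^6}$, and uniqueness is obtained only in the strong ball.
\item The paper's time depends only on the $X_T$ norm, and its uniqueness holds in the weaker class.
\item The continuation argument behind Theorem~\ref{prop:blowup}, which is phrased through $\|a\|_{X_T}$, needs the paper's version. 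On your route you would still need the paper's bootstrap to show that the $C^6$ norm cannot blow up while $\|a\|_{X_T}$ stays bounded.
\end{itemize}

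Several details should be fixed.
\begin{itemize}
\item Put $a(t,0)=\partial_ya(t,0)=0$ into your space. For a general element $N(a)(0)=a(0)^2$, so the compatibility you use (also needed for $N(a)-N(b)$) is otherwise unavailable. The Duhamel map preserves these conditions since $K(t,0,y)=\partial_xK(t,0,y)=0$.
\item Your semigroup splitting is not a commutation. By the kernel relations, $\partial_x^3S(s)g=-\int_0^\infty K_c(s,x,y)\,\partial_y^3g(y)\,dy$ whenever $g(0)=g'(0)=0$. So the outer three derivatives fall on $K_c$ (and on $K_a$, $K_b$ for orders four and five). They are controlled by Lemma~\ref{lem:3}, not by Corollary~\ref{cor:Estimation10}. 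With this identity no intermediate time $(t-\tau)/2$ is needed.
\item The energy/Gronwall route to uniqueness does not close with the $L^2$ energy alone. The term $\int_0^\infty w(y)\,b_y(y)\bigl(\int_0^y w(z)\,dz\bigr)dy$ requires control by $\|w\|_{L^1}$ (or weighted bounds on $b$), which the $L^2$ energy does not provide. Use your alternative instead: show that solutions in the class are mild, then apply the contraction together with a continuity argument keeping them in the ball.
\item Theorem~\ref{prop:halfline_kernel}(iii) gives only pointwise convergence. Continuity of $t\mapsto S(t)a_0$ at $t=0$ in $C^6$ and $W^{4,1}$ therefore needs an additional argument; the paper leaves this point implicit as well.
\end{itemize}
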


\begin{proof}
Use the Duhamel formula:
\begin{equation}
\label{eq:duhamel}
    a(t) = \mathcal{T}[a](t) := S(t)a_0 + \int_0^t S(t-\tau) N(a(\tau)) \, d\tau,
\end{equation}
where $N(a) = a^2 - a_y \int_0^y a(z) \, dz$. We define the solution space
\begin{equation}
X_T = \{ a \in C([0,T]; C^1(\mathbb{R}_+) \cap W^{4,1}(\mathbb{R}_+)) \mid  a(t,0) = \partial_y^{} a(t,0) = 0 \} 
\end{equation}
with the norm 
\begin{equation}
\|a\|_{X_T} = \sup_{0 \le t \le T} (\|a\|_{L^{\infty}} + \|a_y\|_{L^{\infty}} +\|a\|_{W^{4,1}}),\,\, \|f\|_{W^{4,1}} = \sum_{k=0}^4 \|\partial_y^k f\|_{L^1}.
\end{equation}

\textbf{Step 1: Stability of the ball.}
Let $B_R = \{ a \in X_T : \|a\|_{X_T} \le R \}$. 

By Corollary~\ref{cor:idff}, the linear part satisfies $\|S(t)a_0\|_{X_T} \le C\|a_0\|_{X_T}$.

For the nonlinear part, in $L^\infty$ bounds we have:

\begin{equation}\label{eq:nonlinear_Linf}
\begin{aligned}
\|S(t-\tau)N(a)\|_{L^\infty}
&\le C'\bigl(\|a\|_{L^\infty}^2
            + \|a_y\|_{L^\infty}\|a\|_{L^1}\bigr)
 \le C\|a\|_{X^T}^2,\\[6pt]
\|\partial_y S(t-\tau)N(a)\|_{L^\infty}
&\le C'(t-\tau)^{-1/4}\|a\|_{X^T}^2 .
\end{aligned}
\end{equation}

 In $L^1$ bounds we have:
\begin{equation}\label{eq:nonlinear_L1}
\begin{aligned}
\|S(t-\tau)N(a)\|_{L^1}
&\le C\bigl(\|a\|_{L^\infty}\|a\|_{L^1}
            + \|a_y\|_{L^1}\|a\|_{L^1}\bigr)
 \le C\|a\|_{X^T}^2,\\[6pt]
\|\partial_y S(t-\tau)N(a)\|_{L^1}
&\le C(t-\tau)^{-1/4}\|a\|_{X^T}^2,\\[6pt]
\|\partial_y^2 S(t-\tau)N(a)\|_{L^1}
&\le C(t-\tau)^{-1/2}\|a\|_{X^T}^2,\\[6pt]
\|\partial_y^3 S(t-\tau)N(a)\|_{L^1}
&\le C(t-\tau)^{-3/4}\|a\|_{X^T}^2 .
\end{aligned}
\end{equation}

By equation \eqref{eq:smoothing3},

\begin{equation}
    \partial_y^4S(t-\tau)N(a)  = -\int_{0}^{\infty} \partial_y^{3} K_a(t,y,z)\,\partial_zN(a(z))\,dz.
\end{equation}

By Lemma~\ref{lem:3}, $\int_{0}^{\infty} |\partial_y^{3} K_a(t,y,z)dy| \le Ct^{-3/4}$,
\begin{equation}
    \|\partial_y^4S(t-\tau)N(a)\|_{L^1}  \le C(t-\tau)^{-3/4}\|\partial_yN(a(y))\|_{L^1}  \le C(t-\tau)^{-3/4}\|a\|_{X^T}^2.
\end{equation}

 Thus,
\begin{equation}
    \|\mathcal{T}[a]\|_{X_T} \le C_0 \|a_0\|_{X_T} + C_1 (T + T^{3/4} +T^{1/2} + T^{1/4})  R^2.
\end{equation}
Choosing $R = 2C_0 \|a_0\|_{X_T}$ and $T > 0$ small enough, we obtain $\mathcal{T} : B_R \to B_R$.

\medskip
\textbf{Step 2: Contraction property.} For any $a, b \in B_R$, the difference of the nonlinear terms is given by:
\begin{equation}
    N(a) - N(b) = (a+b)(a-b) - (a-b)_y \int_0^y a \, dz - b_y \int_0^y (a-b) \, dz.
\end{equation}

Applying the $X_T$ norm and using the same Sobolev estimates, we obtain:
\begin{equation}\label{eq:nonlinear_diff}
\begin{aligned}
\|N(a)-N(b)\|_{L^\infty}
&\le 2C'R\,\|a-b\|_{X_T},\\
\|N(a)-N(b)\|_{L^1}
&\le 2C'R\,\|a-b\|_{X_T},\\
\|\partial_y\bigl(N(a)-N(b)\bigr)\|_{L^1}
&\le 2C'R\,\|a-b\|_{X_T}.
\end{aligned}
\end{equation}

Therefore,
\begin{equation}
    \|\mathcal{T}[a] - \mathcal{T}[b]\|_{X_T} \le C  (T + T^{3/4} +T^{1/2} + T^{1/4})  R \|a-b\|_{X_T}.
\end{equation}
For sufficiently small $T$, $\mathcal{T}$ become a contraction mapping. By the Banach fixed point theorem, there exists a unique solution $a \in X_T$. 

\medskip
\textbf{Step 3: Smoothing.}
Starting from the solution $a \in C([0, T]; C^1(\mathbb{R}_+))$, we bootstrap the regularity up to $C^6$ through iterative integral estimates. We define the modified operator $S_b$ such that:
\begin{equation}
 ( S_b(t)f)(x) = \int_{0}^{\infty} {K_b}(t,x,y) f(y) \, dy.
\end{equation}
The following estimates ensure the uniform $L^\infty$-boundedness of each derivative, 

\begin{itemize}
    \item \textbf{Estimates for $\partial_y^2 a$ and $\partial_y^3 a$:}
    Since $N(a) \in L^\infty$, we apply the derivatives directly to the kernel:
\begin{equation}\label{eq:duhamel_Linf_high}
\begin{aligned}
\|\partial_y^{2} a(t)\|_{L^\infty}
&\le \|\partial_y^{2} S(t)a_0\|_{L^\infty}
   + \int_{0}^{t} \|\partial_y^{2} S(t-\tau) N(a(\tau))\|_{L^\infty}\, d\tau,\\
\|\partial_y^{3} a(t)\|_{L^\infty}
&\le \|\partial_y^{3} S(t)a_0\|_{L^\infty}
   + \int_{0}^{t} \|\partial_y^{3} S(t-\tau) N(a(\tau))\|_{L^\infty}\, d\tau .
\end{aligned}
\end{equation}

    \item \textbf{Estimates for $\partial_y^4 a$ and $\partial_y^5 a$:}
    Since derivatives of $a$ up to order three are bounded, it follows that
$\partial_y^2 N(a) \in L^\infty$ And we utilize $S_b$ to balance the derivative load (The fulfillment of the compatibility conditions $N(a)(0) = \partial_y N(a)(0) = 0$ justifies the transfer of derivatives to the kernel, as established in Corollary~\ref{cor:idff}):
\begin{equation}\label{eq:duhamel_Linf_veryhigh}
\begin{aligned}
\|\partial_y^{4} a(t)\|_{L^\infty}
&\le \|\partial_y^{4} S(t)a_0\|_{L^\infty}
 + \int_{0}^{t} \|\partial_y^{2} S_b(t-\tau)\,\partial_y^{2} N(a(\tau))\|_{L^\infty}\, d\tau,\\
\|\partial_y^{5} a(t)\|_{L^\infty}
&\le \|\partial_y^{5} S(t)a_0\|_{L^\infty}
 + \int_{0}^{t} \|\partial_y^{3} S_b(t-\tau)\,\partial_y^{2} N(a(\tau))\|_{L^\infty}\, d\tau .
\end{aligned}
\end{equation}

    \item \textbf{Estimate for $\partial_y^6 a$:}
    Finally, the boundedness of the derivatives of $a$ up to order five
implies that $\partial_y^4 N(a) \in L^\infty$.
Hence, we obtain:
    \begin{equation}
    \|\partial_y^6 a(t)\|_{L^\infty} \le \|\partial_y^6 S(t)a_0\|_{L^\infty} + \int_0^t \|\partial_y^2 S(t-\tau) \partial_y^4 N(a(\tau))\|_{L^\infty} \, d\tau.
    \end{equation}
\end{itemize}

By Corollary~\ref{cor:idff}, The terms $\partial_y^k S(t)a_0$ for $k \in \{2, \dots, 6\}$ are uniformly bounded by the higher-order norms of the initial data, $C\|a_0\|_{C^k}$. By the smoothing property (Corollary~\ref{cor:Estimation10}), the kernel singularity is bounded by:
\begin{equation}
\|\partial_y^m S(t-\tau)\|_{L^1 \to L^\infty} \le \frac{C}{(t-\tau)^{m/4}}
\end{equation}
In our construction, the maximum derivative applied to the kernel is $m=3$, yielding a singularity of $(t-\tau)^{-3/4}$. Since $3/4 < 1$, the Duhamel integrals converge, and the solution $a(t,y)$ remains uniformly bounded in $C^6$ on $[0, T]$. Furthermore, since both the linear term $S(t)a_0$ and the nonlinear term $N(a)$ remain in $L^1(\mathbb{R}_+)$, it follows from the decay property of the kernel ((iii) part of the Theorem~\ref{prop:halfline_kernel}) and the Duhamel formula \eqref{eq:duhamel} that the solution $a(t, y)$ and its spatial derivatives up to order 3 satisfy the decay condition at infinity.
\end{proof}

\begin{remark}
Since the nonlinear term $N(a)$ satisfies the compatibility conditions
only up to $N(a)=0$ and $\partial_y N(a)=0$ at the boundary,
higher-order compatibility conditions are not available.
Consequently, the integration-by-parts argument closes only up to the
seventh derivative, provided that the additional initial regularity
$\partial_y^7 a_0 \in L^\infty$ is assumed, while higher-order regularity
cannot be guaranteed within the present framework.
\end{remark}

\subsection{Blow-up of the Solution}
In this paper, blow-up means that the solution cannot remain in the solution
class $X_T$ for all times. We adopt the approach developed in \cite{EEngquist1997, Asano1988ZeroViscosity} to establish the blow-up of solutions. 

\begin{prop} \label{prop:blowup}
Define

\begin{equation}\label{eq:EF}
\begin{aligned}
& F(a) = \int_0^\infty a^2 dy,\,\,\,E(a) = \int_0^\infty (\frac{1}{2}a_{yy}^2\,\,-\ \frac1{12}a^3)dy.
\end{aligned}
\end{equation}

Assume that the initial datum $a_0$ satisfies the conditions of \eqref{eq:b} and has $E(a_0) < 0$. Then there exists a finite time $T^*$ such that 
\begin{equation}
\lim_{t\rightarrow T^* }\|a(\cdot,t)\|_{X_T(\mathbb R^+)}=+\infty.
\end{equation}


\begin{proof}
Assuming that $\|a(\cdot,t)\|_{X_T(\mathbb{R}^+)}$ remains finite,
we derive an inequality between $F$ and $E$.
First, differentiating $F$ with respect to time yields:
\begin{equation}\label{eq:difff}
\frac{dF}{dt}
= -2 \int_0^\infty a_{yy}^2 \, dy
+ 3 \int_0^\infty a^3 \, dy .
\end{equation}

Second, when differentiating $E$ with respect to time, we split $E$ into two terms. First term:
\begin{equation}
\begin{aligned}
&\frac{d}{dt} \int_0^\infty \frac{1}{2} a_{yy}^2 \, dy \\[6pt]
&=
\int_0^\infty
\left(
- a_{yyyy}^2
+ \frac{1}{2} a^2 a_y
\right) 
\, dy\,+ \,a_{yyy}(0,t) a_{yyyy}(0,t) - a_{yy}(0,t)a_{y^{(5)}}(0,t) . \\[6pt]
\end{aligned}
\end{equation}

By the equation \eqref{eq:a}, and differentiating the boundary condition \eqref{eq:b} with respect to the time, boundary term becomes:

\begin{equation}
\begin{aligned}
a_{yyy}(0,t) a_{yyyy}(0,t)-a_{yy}(0,t)a_{y^{(5)}}(0,t)&= - \,a_{yyy}(0,t) a_{t}(0,t) + a_{yy}(0,t)a_{yt}(0,t),  \\[6pt]
&= 0.
\end{aligned}
\end{equation}

Thus, 

\begin{equation}
\begin{aligned}
\frac{d}{dt} \int_0^\infty  \frac{1}{2}a_{yy}^2 \, dy 
=
\int_0^\infty
\left(
- a_{yyyy}^2
+ \frac{1}{2} a_{yy}^2 a
\right) 
\, dy .
\label{eq:lets}
\end{aligned}
\end{equation}

Second term of $E$:

\begin{equation}
\frac{d}{dt} \int_0^\infty \frac{1}{12} a^3 \, dy
=
\int_0^\infty
\left(
- \frac{1}{2}a_{yy}^2 a
+ \frac{1}{3} a^4
\right)
\, dy.
\label{eq:gitit}
\end{equation}

 Combining \eqref{eq:lets} and \eqref{eq:gitit}, with integration by parts, we get:

\begin{equation}
\frac{dE}{dt}
=
-\int_0^\infty
\left(
 (a_{yyyy} - \frac14a^2)^2 
+ \frac{13}{48} a^4
\right)
\, dy .
\label{eq:gitit2}
\end{equation}

Next, we compute the time derivative of $G(a) = -\frac{E(a)}{F(a)^{\beta}}.$
First, we have
\begin{equation}
\begin{aligned}\label{eq:...}
- F\frac{dE}{dt}
&= \int_0^\infty a^2 \, dy
   \int_0^\infty \left( \frac{1}{4}a^2 - a_{yyyy} \right)^2 dy 
   + \frac{13}{48} 
     \int_0^\infty a^2 \, dy
     \int_0^\infty a^4 \, dy, \\[6pt]
&= \int_0^\infty a^2 \, dy
   \int_0^\infty
   \left(
     a_t - \frac{3}{4} a^2
     + a_y \int_0^y a \, dz
   \right)^2dy \\[6pt]
   &+ \frac{13}{48}
     \int_0^\infty a^2 \, dy
     \int_0^\infty a^4 \, dy, \\[6pt]
&\ge
\left(
  \int_0^\infty
  a \left(
    a_t - \frac{3}{4} a^2
    + a_y \int_0^y a \, dz
  \right) dy
\right)^2
+ \frac{13}{48}
  \left( \int_0^\infty a^3 \, dy \right)^2, \\[6pt]
&=
\frac{1}{4}
\left(
   \frac{dF}{dt}
  - \frac{5}{2} \int_0^\infty a^3 \, dy
\right)^2
+ \frac{13}{48}
  \left( \int_0^\infty a^3 \, dy \right)^2.
\end{aligned}
\end{equation}

And, 

\begin{equation}
\begin{aligned} \label{eq:wow}
\frac{dF}{dt} - \frac52 \int_0^\infty a^3 \, dy
&= \int_0^\infty a_{yy}^2 \, dy - 6E
\;\ge\; -6E,
\\[6pt]
\int_0^\infty a^3 \, dy 
&= 6\int_0^\infty a_{yy}^2 \, dy - 12E
\;\ge\; -12E.
\end{aligned}
\end{equation}

Substituting \eqref{eq:wow} into \eqref{eq:...}, we have:
\begin{equation}
\begin{aligned}
- F \frac{dE}{dt}
&\ge
- \frac{3}2 E
\left(
  \frac{dF}{dt}
  - \frac94 \int_0^\infty a^3 \, dy
\right)
- \frac{13}{4} E \int_0^\infty a^3 \, dy,
\\[6pt]
&=
- E
\left(
  \frac{3}2 \frac{dF}{dt}
  - \frac18 \int_0^\infty a^3 \, dy
\right).
\end{aligned}
\end{equation}

By \eqref{eq:wow}, equation \eqref{eq:difff} becomes:
\begin{equation} \frac{dF}{dt} = -4E + \frac{8}{3}\int_0^\infty a^3  dy \ge \frac{8}{3}\int_0^\infty a^3  dy. \end{equation}

If we choose  $\beta \in (1,\,93/64)$ and use the fact that $E<0$ and $\int_0^\infty a^3  dy>0$, we get:
\begin{equation}
\begin{aligned}
- F \frac{dE}{dt} + \beta E \frac{dF}{dt}
&\ge
- E \left\{
\left( \frac{3}2 - \beta \right)\frac{dF}{dt}
- \frac18 \int_0^\infty a^3 \, dy
\right\},
\\[6pt]
&\ge
- E \left( \frac{31}{8} - \frac{8}{3}\beta \right)
\int_0^\infty a^3 \, dy
\;\ge\; 0 .
\end{aligned}
\end{equation}

And we have $\frac{dG}{dt}\ge 0$. Therefore,
\begin{equation}\label{eq:2.9}
\frac{dF}{dt}\ge -6E \ge 6\,G(a_0)\,F^\beta .
\end{equation}

Hence there exists a finite time $T^{*}$ such that $\lim_{t\to T^{*}} F(a)=+\infty$, This completes the proof of Theorem~\ref{prop:blowup}.









\end{proof}
\end{prop}

\subsection{Numerical Example}
To illustrate the blow-up phenomenon established in Theorem~\ref{prop:blowup}, we consider a smooth, compactly supported initial datum $a_0 \in C^\infty(\mathbb{R}_+)$ defined by
\begin{equation} \label{eq:example}
a_0(y) = 
\begin{cases} 
10 \exp\left( -\frac{1}{1 - \left(\frac{y-20}{10}\right)^2} \right), & \text{if } |y-20| < 10, \\
0, & \text{if } |y-20| \ge 10.
\end{cases}
\end{equation}
This bump function is centered at $y_0=20$ with a support in $(10, 30)$. By computing the $E(a_0)$ as defined in \eqref{eq:EF}, $E(a_0) \approx -34.9 < 0$. According to Theorem 4, this ensures the existence of a finite-time blow-up for the corresponding solution.

\begin{figure}[H]
    \centering
    \includegraphics[width=0.75\linewidth]{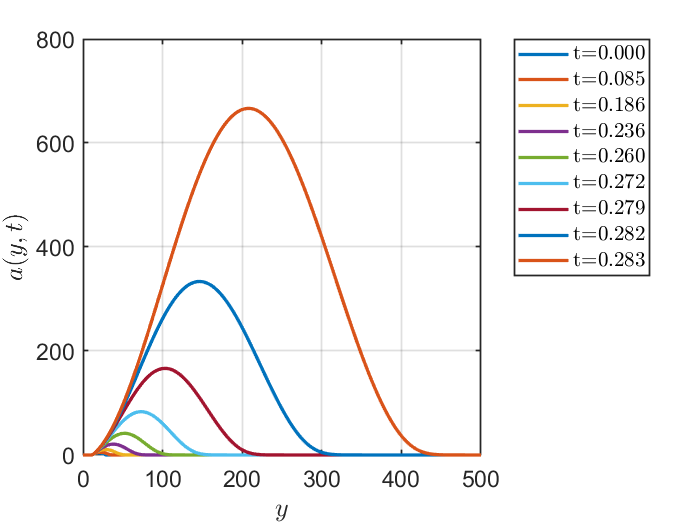}
    \caption{The evolution of the solution profile for the initial data \eqref{eq:example}. 
    Profiles are sampled at time instances $\{t_n\}$ such that $\|a(\cdot, t_n)\|_{L^\infty} = 2^n \|a(\cdot, 0)\|_{L^\infty}$.}
    \label{fig:blowup_evolution}
\end{figure}

\bibliographystyle{amsplain}
\bibliography{references}

\providecommand{\bysame}{\leavevmode\hbox to3em{\hrulefill}\thinspace}
\providecommand{\MR}{\relax\ifhmode\unskip\space\fi MR }
\providecommand{\MRhref}[2]{%
  \href{http://www.ams.org/mathscinet-getitem?mr=#1}{#2}
}
\providecommand{\href}[2]{#2}
\begin{thebibliography}{1}

\bibitem{Asano1988ZeroViscosity}
Kazuo Asano, \emph{Zero-viscosity limit of the incompressible navier--stokes equation, ii}, Mathematical Analysis of Fluid and Plasma Dynamics I, S\^urikaisekikenky\^usho K\^oky\^uroku, vol. 656, Research Institute for Mathematical Sciences, Kyoto, 1988, pp.~105--128.

\bibitem{Chatziafratis2025HigherOrder}
A.~Chatziafratis, A.~Miranville, G.~Karali, A.~S. Fokas, and E.~C. Aifantis, \emph{Higher-order diffusion and {C}ahn--{H}illiard-type models revisited on the half-line}, Mathematical Models and Methods in Applied Sciences \textbf{35} (2025), no.~5, 1133--1197.

\bibitem{EEngquist1997}
Weinan E and Bj{\"o}rn Engquist, \emph{Blowup of solutions of the unsteady {P}randtl equation}, Communications on Pure and Applied Mathematics \textbf{50} (1997), no.~12, 1287--1293.

\bibitem{GerardVaretDormy2010}
David G{\'e}rard-Varet and Emmanuel Dormy, \emph{On the ill-posedness of the prandtl equation}, Journal of the American Mathematical Society \textbf{23} (2010), no.~2, 591--609.

\bibitem{KukavicaVicolWang2017}
Igor Kukavica, Vlad Vicol, and Fei Wang, \emph{The van dommelen and shen singularity in the prandtl equations}, Advances in Mathematics \textbf{307} (2017), 288--311.

\bibitem{OzsariYolcu2019CPAA}
T{\"u}rker \"{O}zsar{\i} and Nermin Yolcu, \emph{The initial-boundary value problem for the biharmonic schr{\"o}dinger equation on the half-line}, Communications on Pure and Applied Analysis \textbf{18} (2019), no.~6, 3285--3316.

\bibitem{SammartinoCaflisch1998a}
Marco Sammartino and Russel~E. Caflisch, \emph{Zero viscosity limit for analytic solutions of the navier--stokes equation on a half-space. i. existence for euler and prandtl equations}, Communications in Mathematical Physics \textbf{192} (1998), no.~2, 433--461.

\end{thebibliography}
\end{document}